\documentclass[12pt]{article}
\usepackage{amsmath}
\usepackage{epsfig}                     
\usepackage{amssymb}

\newtheorem{thm}{Theorem}
\newtheorem{cor}[thm]{Corollary}
\newtheorem{defn}{Definition}

\newtheorem{notation}{Notation}
\newtheorem{lemma}[thm]{Lemma}

\newtheorem{prop}[thm]{Proposition}

\newenvironment{proof} { \emph{Proof.} } { {\rule{2mm}{2mm}}\\}

\newcommand{\bea}{\begin{eqnarray*}}
\newcommand{\eea}{\end{eqnarray*}}

\newcommand{\R}{\mathbb{R}}
\newcommand{\Z}{\mathbb{Z}}
\newcommand{\Q}{\mathbb{Q}}



\newcommand{\bm}{\begin{pmatrix}}
\newcommand{\fm}{\end{pmatrix}}

\title{Quasisymmetric maps of boundaries of amenable hyperbolic groups}
\author{Tullia Dymarz \footnote{Partially supported by NSF grant 1207296.
Deptarment of Mathematics, University of Wisconsin, Madison, 480 Lincoln Dr. 53706.  
email: dymarz@math.wisc.edu } }
\begin{document}
\maketitle
\begin{abstract}
In this paper we show that if $Y=N \times \Q_m$ is a metric space where $N$ is a Carnot group endowed with the Carnot-Caratheodory metric then any quasisymmetric map of $Y$ is actually bilipschitz. The key observation is that $Y$ is the \emph{parabolic visual boundary} of a \emph{mixed type} locally compact amenable hyperbolic group. The same results also hold for a larger class of nilpotent Lie groups $N$.  
As part of the proof we also obtain partial quasi-isometric rigidity results for mixed type locally compact amenable hyperbolic groups. 
Finally we prove a rigidity result for uniform subgroups of bilipschitz maps of $Y$ in the case of $N= \R^n$.\\


\end{abstract}
\section{Introduction}
Quasisymmetric maps were introduced in \cite{BA} as natural replacements for quasiconformal maps for metric spaces where classical quasiconformal maps do not make sense (see Section \ref{prelimSEC} for the definition). 
For many standard metric spaces (such as $\R^n$) quasisymmetric maps are much more abundant than bilipschitz maps.  For $Y=N \times \Q_m$ this is not the case.

\begin{thm}\label{mainthmsimple}  Let $N$ be a Carnot group with the Carnot-Caratheodory metric and $\Q_m$ the $m$-adics with the standard metric. Then any quasisymmetric map of  $Y=N \times \Q_m$  onto itself is bilipschitz.  
\end{thm}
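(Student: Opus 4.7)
The plan is to realize $Y = N \times \Q_m$ as the \emph{parabolic visual boundary} of a mixed-type locally compact amenable hyperbolic group $G$, as suggested in the abstract. Concretely, take $G = (N \times \Q_m) \rtimes_\alpha \R$ (or $\Z$), where the generator of the $\R$-factor acts on $N$ by a Carnot dilation and on $\Q_m$ by multiplication by $m$, so that both factors are contracted at commensurate rates. With this identification the boundary carries a visual metric equivalent to the product of the Carnot--Carath\'eodory metric on $N$ and the standard $m$-adic metric on $\Q_m$. Under this correspondence, any quasisymmetric self-map $f$ of $Y$ extends to a quasi-isometric self-map $F$ of $G$, so the task becomes a rigidity statement for self-quasi-isometries of this mixed-type group.

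The first simplification is purely topological: since $N$ is path-connected while $\Q_m$ is totally disconnected, the connected components of $Y$ are exactly the leaves $N \times \{q\}$, $q \in \Q_m$. A quasisymmetric map is a homeomorphism, so it permutes these leaves, giving a splitting
\[
f(x,q) = \bigl(f_1(x,q),\; f_2(q)\bigr).
\]
Thus $f$ already has a skew-product form; what remains is to show that the fibrewise map $f_1(\cdot,q)$ is bilipschitz on $N$ uniformly in $q$, and that the base map $f_2$ is bilipschitz on $\Q_m$.

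The metric content comes from the dilation action on $Y$ induced by the $\R$-factor of $G$. Because $F$ coarsely commutes with translation along the $\R$-direction, $f$ must coarsely conjugate the dilation action on $Y$ to itself. Applying a dilation $\delta_t$, composing with $f$, and then applying $\delta_{-t}$ produces a family of quasisymmetric maps with uniformly bounded distortion and a uniform bound on how they displace a fixed base point; a compactness/limit argument in the spirit of the Tukia--Pansu approach (adapted to the product $Y$) then identifies the ``derivative'' as a self-map commuting with dilations on the nose. Since the dilation on $\Q_m$ multiplies the metric by $m$, any such equivariant self-map of $\Q_m$ is automatically a bilipschitz homeomorphism; on the $N$-factor, equivariance with respect to Carnot dilations combined with the quasisymmetric bound forces the Pansu derivative to be a graded automorphism of $N$, hence bilipschitz. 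Feeding the bilipschitz ``derivative'' back into the QS condition then forces $f$ itself to be bilipschitz.

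The main obstacle is establishing the partial quasi-isometric rigidity for mixed-type hyperbolic $G$ that is needed to drive the above scheme: one must show that a self-QI of $G$ coarsely respects the decomposition into its Archimedean factor $N \rtimes \R$ and its non-Archimedean factor $\Q_m \rtimes \R$, so that the induced boundary map genuinely splits as a ``product'' on the level of scales and not merely as a skew product. Detecting the totally disconnected direction quasi-isometrically (for instance, via the local topology of horospheres in $G$, or via asymptotic cones where the two factors behave very differently) is the essential geometric ingredient; once this separation of factors is in place, the Carnot and $m$-adic rigidity arguments sketched above apply independently and combine to give the bilipschitz conclusion.
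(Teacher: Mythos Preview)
Your high-level plan coincides with the paper's: model $Y$ as the parabolic boundary of a negatively curved space (the paper's \emph{millefeuille space} $X_{N,\varphi,m}$, a metric fibration $\pi:X\to T_{m+1}$ with fibres $G_\varphi=N\rtimes_\varphi\R$, is exactly the geometric model for your group $G$), pass from a quasisymmetry of $Y$ to a quasi-isometry $F$ of $X$, and reduce everything to showing that $F$ is \emph{height-respecting}. The topological splitting $f(x,q)=(f_1(x,q),f_2(q))$ is also in the paper.

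The genuine gap is in your middle paragraph. You write ``Because $F$ coarsely commutes with translation along the $\R$-direction, $f$ must coarsely conjugate the dilation action'': but this commutation \emph{is} the height-respecting property, and it is not automatic for a quasi-isometry (it fails, for instance, in $\mathbb{H}^{n+1}=\R^n\rtimes\R$). You later flag this as ``the main obstacle,'' yet your proposed attacks (local topology of horospheres, asymptotic cones) remain heuristic. The paper's resolution is specific and does not appear in your outline: it invokes Farb--Mosher's Theorem~7.7 on metric fibrations over bushy trees, which says any quasi-isometry of $X$ sends each \emph{hyperplane} (the preimage of a bi-infinite tree geodesic, a contractible $(\dim N+1)$-manifold) to within bounded Hausdorff distance of another hyperplane, and likewise permutes horizontal leaves. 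A short path-length argument then rules out a coherently oriented hyperplane (a geodesically embedded copy of $G_\varphi$) landing on a non-coherent one (a doubled horoball complement, whose induced path metric is exponentially distorted). Hence $F$ restricts to a level-set-permuting quasi-isometry on each copy of $G_\varphi$, and known results for $G_\varphi$ (Farb--Mosher Proposition~5.8, Reiter) upgrade this to height-respecting on all of $X$.

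Finally, once height-respecting is established, your Tukia--Pansu differentiation step is superfluous: height-respecting means the level set at height $t_0$ goes near the level set at height $t_0+a$, and exponentiating gives $d_\infty(f(\xi),f(\xi'))\asymp e^{a}\,d_\infty(\xi,\xi')$ directly, which is bilipschitz. Running Pansu differentiation \emph{before} knowing height-respecting would be the Xie-style boundary approach, but then the rescaled family $\delta_{-t}\circ f\circ\delta_t$ has no a~priori uniform control, and on the Carnot factor alone Pansu differentiability of a quasisymmetric map does not force it to be bilipschitz (quasiconformal maps of $\R^n$ are the basic counterexample). So as written that paragraph is either unnecessary or circular.
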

Quasisymmetric maps are especially interesting since they are strongly linked to negative curvature geometry in that quasi-isometries of negatively curved spaces induce quasisymmetric maps of their visual boundaries. 
We are able to study $Y$ precisely because it is the parabolic visual boundary of a geodesic negatively curved space which we will denote $X_{N,\varphi,m}$. The space $X_{N,\varphi,m}$ is constructed out of a simplicial $m+1$ valent tree $T_{m+1}$ and a negatively curved homogeneous space $G_\varphi=N \rtimes_\varphi \R$ where $\R$ acts on $N$ by a one parameter group of dilations  $\varphi$. 
(For the precise definition of  $X_{N,\varphi,m}$ see Section \ref{millefeuilleSEC}).
Quasisymmetric maps of $Y=N \times \Q_m=\partial X_{N,\varphi,m}$ are induced by quasi-isometries of $X_{N,\varphi,m}$ while bilipschitz maps of $Y$ are induced by \emph{height-respecting} quasi-isometries (see Section \ref{catSEC} for the definition). Using coarse topology provided by \cite{FM3} we prove Theorem \ref{mainthm} by showing that all quasi-isometries of $X_{N,\varphi,m}$ are height-respecting. 

\begin{prop}\label{HRQIprop} All quasi-isometries of $X_{N,\varphi,m}$ are height-respecting.\end{prop}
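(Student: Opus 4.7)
The plan is to adapt the coarse topological strategy of Farb--Mosher \cite{FM3}, developed for Sol-like and treebolic spaces, to the mixed-type space $X_{N,\varphi,m}$. By construction this space carries a natural height function $h\colon X_{N,\varphi,m}\to\R$ whose level sets $H_t=h^{-1}(t)$ are horospheres based at the distinguished upper parabolic point, each quasi-isometric to $Y=N\times\Q_m$. A quasi-isometry $\Phi$ is height-respecting precisely when $h\circ\Phi$ agrees with $h$ up to bounded additive error, so it suffices to show that $\Phi$ permutes the family $\{H_t\}$ up to bounded Hausdorff distance and that the induced map on heights is monotone with bounded displacement from the identity.

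The first step is coarse separation: each horosphere $H_t$ divides $X_{N,\varphi,m}$ into an upward region $U_t=h^{-1}((t,\infty))$, quasi-isometric to a horoball in $G_\varphi$ with a single ``contracting'' coarse end, and a downward region $D_t=h^{-1}((-\infty,t))$ inheriting $m$-fold branching from the tree $T_{m+1}$ and having uncountably many coarse ends. The coarse Alexander duality and coarse separation tools of \cite{FM3} are sensitive to this asymmetry and allow one to characterize horospheres as the unique (up to finite Hausdorff distance) codimension-one coarse subsets separating $X_{N,\varphi,m}$ into two pieces of the prescribed asymmetric type. Since any quasi-isometry $\Phi$ preserves coarse hypersurfaces together with the coarse-topological invariants of their complements, $\Phi(H_t)$ must lie within bounded Hausdorff distance of some $H_{\alpha(t)}$; the up/down asymmetry forces $\alpha$ to be orientation-preserving, and standard arguments then give that $h\circ\Phi-h$ is bounded.

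The main obstacle is the horosphere characterization itself: in the treebolic case of \cite{FM3} the horospheres are quasi-isometric to Euclidean $\R^n$, where coarse Alexander duality applies cleanly, but here the horospheres $N\times\Q_m$ combine an archimedean Carnot direction with a non-archimedean factor, so the coarse-topological invariants must be chosen robustly enough to accommodate both. A secondary subtlety is to rule out quasi-isometries that coarsely interchange the $N$-direction with the tree direction; this should follow from the incompatibility of local structure in the archimedean versus non-archimedean factors, but has to be verified carefully in the mixed setting.
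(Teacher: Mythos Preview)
Your approach has a genuine gap, and it is precisely the obstacle you flag but do not resolve. The Farb--Mosher coarse separation and Alexander duality machinery in \cite{FM3} is built for subspaces that are uniformly contractible bounded-geometry \emph{manifolds} (see the hypotheses of Theorems~7.3 and~7.7 there). Your horospheres $H_t\simeq N\times\Q_m$ carry a totally disconnected factor and are not manifolds, so the characterization ``horospheres are the unique codimension-one coarse separators with asymmetric complements'' cannot be deduced from \cite{FM3} as written. You acknowledge this, but the proposal offers no mechanism for extending the coarse-topological invariants to the mixed archimedean/non-archimedean setting. Without that, the argument does not go through.

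The paper avoids this problem entirely by working with \emph{hyperplanes} rather than horospheres. Viewing $\pi:X_{N,\varphi,m}\to T_{m+1}$ as a metric fibration, the horizontal leaves $\pi^{-1}(v)\simeq N$ and the hyperplanes $\pi^{-1}(\ell)\simeq G_\varphi$ are genuine contractible manifolds, so Theorem~7.7 of \cite{FM3} applies directly and shows that any quasi-isometry preserves both families up to bounded Hausdorff distance. The remaining issue is that hyperplanes come in two flavours---coherently oriented ones (copies of $G_\varphi$) and doubled horoball complements---and one must rule out a quasi-isometry sending the former to the latter. The paper does this with a short path-length argument: a doubled horoball complement with its induced metric is exponentially distorted (it is not geodesic in $X$), whereas a coherently oriented hyperplane is geodesically embedded, and a quasi-isometry cannot convert linear path lengths into exponential ones. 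Once coherent hyperplanes are preserved, the restriction of $f$ to each one is a quasi-isometry of $G_\varphi$ permuting height level sets, which is height-respecting by \cite{FM3} and \cite{Reiter}; this forces the induced map on $T_{m+1}$, and hence on $X_{N,\varphi,m}$, to be height-respecting.

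So the up/down asymmetry you want to exploit does ultimately enter, but at the level of hyperplanes (geodesic versus exponentially distorted) rather than at the level of horospheres. Reworking your outline along those lines would close the gap.
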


If we allow $N$ to be any connected, simply connected nilpotent Lie group whose Lie algebra admits a derivation whose eigenvalues have positive real parts  then we arrive at the same conclusions using the same methods as above where now the Carnot-Caratheodory metric is replaced by a suitable metric $d_\varphi$. (For a description of the metric $d_\varphi$ see Section \ref{nchsSEC}).  Therefore Theorem \ref{mainthmsimple} will actually be proved as a special case of the following theorem. 

\begin{thm}\label{mainthm}  Let $N$ be a nilpotent Lie group with an admissible metric $d_\varphi$ and $\Q_m$ the $m$-adics with the standard metric. Then any quasisymmetric map of  $Y=N \times \Q_n$  onto itself is bilipschitz.  
\end{thm}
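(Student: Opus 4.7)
The plan is to derive Theorem \ref{mainthm} as a formal consequence of Proposition \ref{HRQIprop} together with the two correspondences already announced in the introduction: quasisymmetric self-maps of $Y=\partial X_{N,\varphi,m}$ are induced by quasi-isometries of $X_{N,\varphi,m}$, while bilipschitz self-maps of $Y$ correspond to the \emph{height-respecting} quasi-isometries. Thus essentially all of the genuine work sits inside Proposition \ref{HRQIprop}; the theorem itself is a packaging step.

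First I would take an arbitrary quasisymmetric homeomorphism $f\colon Y\to Y$. Using the Gromov hyperbolicity of $X_{N,\varphi,m}$ (established in Section \ref{millefeuilleSEC}) together with the standard boundary-extension theory for proper geodesic hyperbolic spaces, $f$ lifts to a quasi-isometry $F\colon X_{N,\varphi,m}\to X_{N,\varphi,m}$ whose induced boundary map agrees with $f$ on $Y$, viewed as the complement of the distinguished parabolic point in $\partial X_{N,\varphi,m}$.

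Next I would invoke Proposition \ref{HRQIprop} to conclude that $F$ is height-respecting, and then run the boundary correspondence in reverse: a height-respecting quasi-isometry of $X_{N,\varphi,m}$ descends to a bilipschitz map of the parabolic boundary $Y$. This last step relies on the fact that the parabolic visual (Hamenst\"adt-type) metric on horospheres based at the distinguished boundary point is bilipschitz equivalent to the product of $d_\varphi$ on $N$ and the standard metric on $\Q_m$, which is built into the construction of $X_{N,\varphi,m}$ in Section \ref{millefeuilleSEC}. Combining these three steps, $f$ coincides with a bilipschitz self-map of $Y$, hence is itself bilipschitz.

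The main obstacle is not in this packaging but in Proposition \ref{HRQIprop}: one must rule out quasi-isometries of the mixed-type space $X_{N,\varphi,m}$ that fail to coarsely preserve the height function, which is precisely where the coarse-topology techniques of \cite{FM3} are needed to handle the interaction between the tree factor $T_{m+1}$ and the negatively curved homogeneous factor $G_\varphi=N\rtimes_\varphi \R$. Theorem \ref{mainthmsimple} then follows as the special case in which $d_\varphi$ is the Carnot-Carath\'eodory metric on a Carnot group $N$.
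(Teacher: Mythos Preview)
Your proposal is correct and matches the paper's approach: the paper likewise derives Theorem \ref{mainthm} by reducing to Proposition \ref{HRQIprop} and then invoking Proposition \ref{hrtobilip} (height-respecting quasi-isometries induce bilipschitz boundary maps), with Proposition \ref{bmetricPROP} supplying the identification of $\partial X_{N,\varphi,m}\setminus\{\infty\}$ with $(N\times\Q_m,d_{\varphi,m})$. The only difference is that you make the boundary-to-interior lifting step explicit, whereas the paper treats the correspondence between quasisymmetric boundary maps and quasi-isometries as already given in the introduction.
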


These results should be compared to results of Xie in \cite{X1,X2,X3,X4} which use the same dictionary but in the reverse direction to show that all quasi-isometries of certain negatively curved homogeneous spaces are height-respecting by proving directly that all quasisymmetric maps of their boundaries are bilipschitz.  

In fact, Theorem \ref{mainthm} can be thought of as an extension of Xie's results. 
In \cite{Hein}, Heintze showed that all negatively curved homogeneous spaces can be given as solvable Lie groups (see Section \ref{nchsSEC} for details). In a similar spirit \cite{CCMT}, Caprace et al. classify all locally compact amenable hyperbolic groups. 
They show that there are three types of non-elementary amenable hyperbolic locally compact groups: negatively curved homogeneous spaces, stabilizers of an end in the full automorphism group of a semi-regular locally finite tree, and combinations of the two via a warped product construction. It is this last type that we call \emph{mixed type}.
All mixed type amenable hyperbolic groups act properly on some $X_{N,\varphi,m}$. Our analysis provides partial quasi-isometric rigidity results for mixed-type non-elementary amenable hyperbolic locally compact groups. See also \cite{CORN}.

\begin{cor}\label{QIcor1} $X_{N,\varphi,m}$ is not quasi-isometric to $X_{N',\varphi',m'}$ if $m,m'$ are not powers of a common base or if $G_\varphi=N \rtimes_\varphi \R$ is not quasi-isometric to $G_{\varphi'}=N' \rtimes_{\varphi'} \R$. 
\end{cor}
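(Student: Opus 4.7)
The plan is to prove the contrapositive: assume there is a quasi-isometry $\Phi: X_{N,\varphi,m} \to X_{N',\varphi',m'}$ and extract both a bilipschitz equivalence $\Q_m \simeq \Q_{m'}$ and a quasi-isometry $G_\varphi \simeq G_{\varphi'}$. Since both spaces are Gromov hyperbolic, $\Phi$ induces a quasisymmetric homeomorphism $\partial\Phi$ between the visual boundaries. The parabolic fixed point on each side is distinguished from the rest of the boundary (for instance, by local topology near it and the existence of the parabolic visual metric), so after postcomposing $\Phi$ with a self-quasi-isometry of $X_{N',\varphi',m'}$ that moves the parabolic point, we may arrange that $\partial\Phi$ restricts to a quasisymmetric homeomorphism $Y = N\times\Q_m \to Y' = N'\times\Q_{m'}$ of the parabolic boundaries.

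The next step is to generalize Proposition \ref{HRQIprop} to quasi-isometries between two distinct spaces $X_{N,\varphi,m}$ and $X_{N',\varphi',m'}$. The coarse-topological input from \cite{FM3} that drives the proof of Proposition \ref{HRQIprop} concerns the ambient hyperbolic geometry and the way horospheres and tree-copies of $G_\varphi$ sit inside $X_{N,\varphi,m}$; this analysis is invariant under quasi-isometries of hyperbolic spaces and does not use that source and target coincide. With only bookkeeping changes one concludes that $\Phi$ is height-respecting, and hence induces a bilipschitz homeomorphism $f: Y \to Y'$ via the boundary correspondence described in Section \ref{catSEC}.

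Finally I would analyze $f$ to extract bilipschitz maps between the factors. The $\Q_m$-fibres in $Y$ should be intrinsically characterized as the maximal ultrametric directions (equivalently, the direction in which the metric is totally disconnected), so $f$ carries $\Q_m$-fibres to $\Q_{m'}$-fibres bilipschitzly. A bilipschitz equivalence between $\Q_m$ and $\Q_{m'}$ forces $m$ and $m'$ to be powers of a common integer, by the standard observation that the set of bilipschitz scaling ratios between compact ultrametric balls in $\Q_k$ is generated by $k$. Passing to a transverse slice, $f$ also restricts to a bilipschitz map $(N, d_\varphi) \to (N', d_{\varphi'})$ of the parabolic visual boundaries of $G_\varphi$ and $G_{\varphi'}$, which lifts via the standard boundary-to-interior correspondence to a height-respecting quasi-isometry $G_\varphi \to G_{\varphi'}$. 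The main obstacle I expect is the last disentanglement: the product decomposition $Y = N \times \Q_m$ is not canonical and $f$ need not be a product map, so showing that $f$ preserves both factors up to bilipschitz equivalence requires a careful intrinsic metric recognition of the ultrametric $\Q_m$-fibres inside $Y$.
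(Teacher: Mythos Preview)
Your overall strategy aligns with the paper's: generalize Proposition~\ref{HRQIprop} to quasi-isometries $X_{N,\varphi,m}\to X_{N',\varphi',m'}$, deduce a bilipschitz map of parabolic boundaries, and read off the two conclusions from the factors. Two points need correction.

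First, the suggestion to ``postcompose $\Phi$ with a self-quasi-isometry of $X_{N',\varphi',m'}$ that moves the parabolic point'' cannot work: by Proposition~\ref{HRQIprop} every self-quasi-isometry of $X_{N',\varphi',m'}$ is height-respecting and therefore fixes $\infty$. The paper sidesteps the whole issue by never passing to the boundary first. It applies Theorem~\ref{FMpreserves} and Proposition~\ref{COHprop} directly to $\Phi$, showing that coherently oriented hyperplanes are sent to coherently oriented hyperplanes. Since each such hyperplane is an isometrically embedded copy of $G_\varphi$ (resp.\ $G_{\varphi'}$), this immediately produces the height-respecting quasi-isometry $G_\varphi\to G_{\varphi'}$; that $\partial\Phi(\infty)=\infty'$ is then a byproduct rather than a preliminary hurdle.

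Second, the ``main obstacle'' you anticipate---intrinsically recognizing the $\Q_m$-fibres inside $Y$ via ultrametricity---dissolves once you look at the \emph{other} factor. The connected components of $Y=N\times\Q_m$ are exactly the slices $N\times\{y_0\}$, so any homeomorphism (in particular any bilipschitz map) $Y\to Y'$ automatically sends $N$-slices to $N'$-slices and hence descends to a bilipschitz map $\Q_m\to\Q_{m'}$; this is the content of Lemma~\ref{decompLEM}. The common-base conclusion is then quoted from the appendix of \cite{FM1} rather than argued via scaling ratios.
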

A more detailed classification result can be deduced in many cases from 
 \cite{FM3,P2,Reiter} but we will not give the details here except in case of $N=\R^n$.
Note that if we specialize to $N= \R^n$ then $\varphi=\varphi_M$ is given by multiplication by a one parameter subgroup $M^t \in GL_n(\R)$ where $M$ is a matrix whose eigenvalues all have norm greater than one. 
Using work of \cite{FM3} we get a full quasi-isometry classification in this case.

\begin{cor}\label{QIcor2}  $X_{\R^n,\varphi_M,m}$ is quasi-isometric to $X_{\R^n, \varphi_{M'},m'}$ if and only if $m=r^i$, $m'=r^j$ for some $r,i,j \in \mathbb{N}$ and 
$M$ and $M'$ have absolute Jordan forms  that  are powers of each other in such a way so that 
$\ln \alpha_1 /\ln \alpha_1'= i-j$ where $\alpha_1, \alpha_1'$ are the smallest eigenvalues of $M$ and $M'$ respectively.
\end{cor}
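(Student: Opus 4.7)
The plan is to combine Proposition \ref{HRQIprop} with the known quasi-isometric classifications of the two building blocks ($T_{m+1}$ on the tree side and $G_{\varphi_M}$ on the Heintze side), keeping careful track of how the Busemann function decomposes along the two factors.

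\textbf{Forward direction.} Suppose $f:X_{\R^n,\varphi_M,m}\to X_{\R^n,\varphi_{M'},m'}$ is a quasi-isometry. By Proposition \ref{HRQIprop}, $f$ is height-respecting, so it preserves the horofunction structure and induces a bilipschitz map $\partial f:\R^n\times\Q_m\to\R^n\times\Q_{m'}$ of the parabolic boundaries. Because a height-respecting QI sends horospheres to horospheres and tree-leaves to tree-leaves (up to bounded error), $\partial f$ respects the product decomposition. Projecting onto the $\Q$-factor gives a bilipschitz (up to scale) map between the $m$-adic and $m'$-adic metric spaces; a standard fact about ultrametric Cantor sets then forces $m=r^i$ and $m'=r^j$ for some $r,i,j\in\mathbb{N}$. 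Projecting onto the $\R^n$-factor yields a bilipschitz map $(\R^n,d_{\varphi_M})\to(\R^n,d_{\varphi_{M'}})$, or equivalently a height-respecting QI $G_{\varphi_M}\to G_{\varphi_{M'}}$; the classification of such QIs in \cite{FM3} forces the absolute Jordan forms of $M$ and $M'$ to be powers of each other.

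\textbf{Converse direction.} Given the conditions, one constructs an explicit QI from the pieces: the canonical bilipschitz identification between $T_{m+1}$ and a power of $T_{m'+1}$ (available whenever $m,m'$ are commensurable powers of a common base) and the QI $G_{\varphi_M}\to G_{\varphi_{M'}}$ arising from raising $M$ and $M'$ to the matching powers. The numerical matching condition ensures that when these maps are assembled into a map between the millefeuille spaces, the tree-height and $G_\varphi$-height contributions to the ambient Busemann function align, so the combined map is a height-respecting quasi-isometry of $X_{\R^n,\varphi_M,m}$ onto $X_{\R^n,\varphi_{M'},m'}$.

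\textbf{Main obstacle.} The hard part is the numerical matching condition. It requires tracing how the single Busemann function of $X$ decomposes into tree and $G_\varphi$ contributions, and how a height-respecting QI simultaneously rescales both contributions. The scale of the induced bilipschitz map on the $\Q_m$ factor records the tree's expansion rate (effectively $\ln r$ per unit of tree height, multiplied by $i$ or $j$), while the scale of the induced bilipschitz map on $\R^n$ records the slowest expansion in the $G_\varphi$ direction (controlled by $\ln\alpha_1$, resp.\ $\ln\alpha_1'$). Equating the two rescalings, as required for a single global height-respecting map, yields the stated relation between $\ln\alpha_1/\ln\alpha_1'$ and the exponents $i,j$. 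Once this bookkeeping is in place, both directions follow quickly from \cite{FM3} and Proposition \ref{HRQIprop}.
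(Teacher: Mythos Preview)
Your approach is essentially the same as the paper's: reduce to a height-respecting map, pass to the boundary, split into the $\R^n$ and $\Q_m$ factors, and then invoke \cite{FM3} for the Heintze side and \cite{FM1} for the tree side, with the numerical condition coming from matching the height-scaling on both factors. The paper carries this out in the paragraph headed ``Remark on the proof of Corollaries \ref{QIcor1} and \ref{QIcor2}'', and your discussion of the ``Main obstacle'' accurately captures how the equality $\ln\alpha_1/\ln\alpha_1'=i-j$ arises.

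There is, however, a genuine gap in how you invoke Proposition \ref{HRQIprop}. That proposition, as stated, concerns \emph{self}-quasi-isometries of a single $X_{N,\varphi,m}$; it says nothing about a map $X_{\R^n,\varphi_M,m}\to X_{\R^n,\varphi_{M'},m'}$ between two distinct millefeuille spaces. The paper handles this by observing that Theorem \ref{FMpreserves} (Farb--Mosher's Theorem 7.7) and Proposition \ref{COHprop} go through unchanged for maps between two different metric fibrations, and that the very notion of ``height-respecting'' must be modified to allow an affine map $t\mapsto at+b$ on heights rather than a translation; only then does Proposition \ref{hrtobilip} extend (with the correct choice of boundary exponents). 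You should make this extension explicit rather than cite Proposition \ref{HRQIprop} as if it applied directly.

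A second, smaller point: your justification that $\partial f$ ``respects the product decomposition'' via ``tree-leaves to tree-leaves'' is not how the paper argues. The paper deduces the splitting $\partial f(x,y)=(f_1(x,y),f_2(y))$ purely topologically from Lemma \ref{decompLEM}: the connected components of $\R^n\times\Q_m$ are exactly the slices $\R^n\times\{y_0\}$, so any homeomorphism must permute them. In particular the $\R^n$-component $f_1$ may genuinely depend on $y$, so your phrase ``projecting onto the $\R^n$-factor yields a bilipschitz map $(\R^n,d_{\varphi_M})\to(\R^n,d_{\varphi_{M'}})$'' is not quite right; one fixes a $y$ (equivalently, restricts to a single coherently oriented hyperplane via Proposition \ref{COHprop}) to obtain the height-respecting QI $G_{\varphi_M}\to G_{\varphi_{M'}}$ to which \cite{FM3} applies.
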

We also extend results from \cite{D1} and prove that certain groups of bilipschitz maps of $\R^n \times \Q_m$  can be conjugated to be of a particularly nice form. We state the theorem here but all definitions are given in the appendix.

\begin{thm}\label{anothertukia} Let $U$ be a uniform separable subgroup of $Bilip_{{M}}(\R^n \times \Q_m)$ that acts cocompactly on the space of distinct pairs of points of $\R^n \times \Q_m$. Then 
$U$ can be conjugated  into $ASim_{{M}}(\R^n\times \Q_p)$ for some $p$.
\end{thm}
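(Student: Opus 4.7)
The plan is to follow the Tukia-style strategy used in \cite{D1}, adapted to the product $\R^n \times \Q_m$. The central idea is to exploit the cocompact action of $U$ on pairs, together with a.e.\ differentiability of uniformly bilipschitz maps, to build an invariant measurable structure on each factor, and then to conjugate by a single bilipschitz map that straightens that structure simultaneously.

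First I would establish pointwise differentiation for elements of $U$ along the $\R^n$ direction. Since every $f \in U$ lies in $Bilip_{M}$, it respects the product-structure coming from $M$, so at a.e.\ point $(x,t) \in \R^n \times \Q_m$ there is a well-defined tangent map which is $M^{s}$-homogeneous on the $\R^n$ factor for the appropriate height $s$. Uniformity of the bilipschitz constant across $U$ forces these tangent maps to take values in a compact family modulo $ASim_{M}$. Combining this with separability and the cocompactness of the $U$-action on distinct pairs, a Borel measurable selection / averaging argument should yield a $U$-invariant measurable field of inner products (equivalently, ellipsoids) on the $\R^n$-factor, parametrised over points of the ultrametric factor.

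Next I would straighten this structure. The $M$-homogeneity together with invariance forces the ellipsoid field to be essentially constant after an allowable affine change of coordinates on $\R^n$, and conjugating by that single affine map pushes the $\R^n$-part of the action of $U$ into $ASim_{M}$. For the $\Q_m$-factor the situation is subtler: bilipschitz maps need not preserve the $m$-adic scaling, and the invariant ``base'' produced by averaging over $U$ may well correspond to a different prime power $p$. Since $\Q_m$ and $\Q_p$ admit bilipschitz identifications whenever their Hausdorff dimensions match, this change of base can be absorbed into the conjugating map, yielding the final conjugation into $ASim_{M}(\R^n\times \Q_p)$.

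The main obstacle is coupling the two factors: the $\R^n$-differentiation argument runs Lebesgue-a.e.\ in $\R^n$, the ultrametric analysis runs Haar-a.e.\ in $\Q_m$, and one must upgrade these fiberwise invariant structures into a globally measurable object whose straightening is an honest bilipschitz self-map of the full product. In particular, verifying that the cocompact action on \emph{pairs} (rather than merely on $\R^n\times \Q_m$) provides enough mixing to force the invariant ellipsoid field to be essentially constant across ultrametric fibers, and that the resulting straightening is $Bilip_{M}$-compatible simultaneously on both factors, will be the technically delicate part of the argument.
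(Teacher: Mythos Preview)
Your outline captures the Tukia-style strategy for the $\R^n$ factor reasonably well: the invariant measurable conformal structure (your ``field of ellipsoids'') and the use of cocompactness on pairs to produce radial points are indeed the core of that part of the argument, and the paper carries it out eigenspace-by-eigenspace following \cite{D1}. However, there are two genuine gaps.

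First, your treatment of the $\Q_m$ factor is both vague and partly incorrect. There is no ``averaging over $U$'' that produces a new base $p$, and $\Q_m$ and $\Q_p$ are bilipschitz equivalent only when $m$ and $p$ are powers of a common integer, not merely when their Hausdorff dimensions match. The paper instead handles this factor \emph{first} and \emph{independently}: a one-line topological observation (the connected components of $\R^n\times\Q_m$ are the slices $\R^n\times\{y_0\}$) shows that every $f\in Bilip_M(\R^n\times\Q_m)$ splits as $f(x,y)=(f_1(x,y),f_2(y))$, yielding a natural projection $\pi:U\to Bilip(\Q_m)$. The image $\pi(U)$ is uniform and acts cocompactly on pairs in $\Q_m$, so Theorem~7 of Mosher--Sageev--Whyte \cite{MSW1} conjugates it into $Sim(\Q_p)$ for some $p$. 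That theorem, not any dimension or averaging argument, is where $p$ comes from.

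Second, and relatedly, the ``coupling of the two factors'' that you flag as the main obstacle is in fact a non-issue: the decomposition above decouples them from the outset. Once the ultrametric part has been straightened to similarities via \cite{MSW1}, the $\R^n$ argument from \cite{D1} runs with the $\Q_p$ coordinate acting only as an extra parameter, using the product of Lebesgue measure and Hausdorff measure on $\Q_p$; approximate continuity of the invariant conformal structure almost everywhere (Federer) together with radiality of every point then gives the conjugation exactly as in \cite{D1}. So the difficulty you anticipate does not arise, but the mechanism you propose for the $\Q_m$ side would not work as stated.
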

The proof follows Theorem 2 in  \cite{D1} very closely which is why we relegate this result to the appendix and provide only a brief outline.\\

\noindent{\bf Remark.}
Theorem \ref{mainthm} should hold in even more generality. For example it should hold for any space whose hyperbolic cone (see  \cite{BS}) satisfies the conditions imposed in Theorems 7.3 and 7.7 in \cite{FM3}. (See Section \ref{proofSEC} for the statements of these theorems.)

\subsection{Outline} Following some preliminaries in Section \ref{prelimSEC} we study the geometry and boundaries of $X_{N,\varphi,m}$ in Section \ref{catSEC}. We prove Theorem \ref{mainthm} and Proposition \ref{HRQIprop} along with Corollaries \ref{QIcor1} and \ref{QIcor2} in Section \ref{proofSEC}. In the appendix we prove Theorem \ref{anothertukia}.\\

\noindent{\bf Acknowledgements.} The author would like to thank Xiangdong Xie for useful conversations and  Yves de Cornulier, John Mackay, Xiangdong Xie and the referee for comments on an earlier draft. 

\section{Preliminaries}\label{prelimSEC}

\begin{defn}A map $f:X \to Y$ between metric spaces is called a
\begin{description}
\item[Quasisymmetric embedding] if for some homeomorphism $\eta:[0, \infty) \to [0, \infty)$ and any distinct $x,x',y \in X$
$$ \frac{d_Y(f(y),f(x))}{d_Y(f(y),f(x'))} \leq \eta\left( \frac{d_X(y,x)}{d_X(y,x')} \right).$$
\item[Bilipschitz embedding] if there exist positive constants $a,b>0$ such that for all $x,x'\in X$
$$a\ d(x,x') \leq d(f(x),f(x'))\leq b\ d(x,x').$$
If we can chose $a=1/K$ and $b=K$ then we say $f$ is $K$-\emph{bilipschitz}.
If we can chose $a=s/K$ and $b=sK$ then we say that $f$ is a ($K,s$)-\emph{quasi-similarity}. 
We say that a group of bilipschitz maps/quasi-similarities is \emph{uniform} if $K$ is uniform over all group elements. 
\item[(K,C)-Quasi-isometric embedding] 
if  there exist $K,C >0$ such that for all $x,x' \in X$
$$-C + \frac{1}{K} d_X(x,x') \leq d_Y(f(x),f(x')) \leq K d_X (x,x') +C$$
\item[S-Similarity] if there exists $S > 0$ such that for all $x,x' \in X$ 
$$ d_Y(f(x),f(x')) = S d_X (x,x')$$

\end{description}
\end{defn}

\section{$CAT(-k)$ spaces and their visual boundaries}\label{catSEC}

Let $X$ be a $CAT(-k)$ metric space. Riemannian manifolds with sectional curvature $\leq -k$ and their convex subsets are the prime examples of $CAT(-k)$ spaces. Gluing two $CAT(-k)$ spaces along closed convex subsets also results in a $CAT(-k)$ subspace. Note that up to rescaling the metric by a constant we can assume that $k=1$.

Given a $CAT(-1)$ space $X$ and $a \in \partial X$ and we define the \emph{Euclid-Cygan} metric on  
$\partial X - \{a\}$ as is done in the appendix in  \cite{HP}.
Let $\mathcal{H}$ be a horosphere centered at $a$ and $b,c \in \partial X - \{a\}$.
Let $b_t,c_t$ be two geodesics connecting $b,c$ to $a$ with $b_0,c_0 \in \mathcal{H}$. (Note that this orientation is the opposite of what is given in \cite{HP}). The Euclid-Cygan metric on $\partial X - \{a\}$ is given by 
$$d_{a,\mathcal{H}}(b,c)= \lim_{t \to -\infty} e^{\frac{1}{2}(2t+d_X(b_t,c_t))}.$$
We can also endow $\partial X - \{a\}$ with a \emph{visual parabolic} metric.
For any $b,c \in \partial X - \{a\}$ the visual parabolic metric is given by 
$$ \bar{d}_{a,\mathcal{H}}(b,c)=e^{t_0}$$ 
where $t_0$ is  the point at which $d_X(b_{t_0}, c_{t_0})=1$.
It is easy to see that the two metrics are bilipschitz equivalent since if $t\leq t_0$ then 
$$2(t_0-t)+1-C \leq d_X(b_t,c_t) \leq  2(t_0-t) + 1 + C $$
and so 
\bea
d_{a, \mathcal{H}}(b,c)  &=& \lim_{t \rightarrow -\infty} e^{\frac{1}{2}(2t + d_{X}(b_{t}, c_{t}))}\\
&\simeq& \lim_{t \rightarrow -\infty} e^{\frac{1}{2}(2t +2(t_0-t) +1)}\\
&\simeq& e^{t_0}=\bar{d}_{a, \mathcal{H}}(b,c).
\eea
 (See \cite{X1} for more details). \\
  
\noindent{\bf Remarks.} Note that the reason $e$ is chosen as a base here is because $X$ is $CAT(-1)$. If $X$ were $CAT(-k)$ the base would be $e^{\frac{1}{\sqrt{k}}}$ instead of $e$. We can in fact choose the base to be $e^\alpha$ for any $\alpha 
\leq 1$. This is equivalent to \emph{snowflaking} the metric. 

\begin{defn} For any $b \in \partial X - \{a\}$ there is a unique geodesic $b_t$ connecting $b$ to $a$ with $b_0\in \mathcal{H}$. We call such geodesic a 
\emph{vertical geodesic}. 
\end{defn}
In this paper we will only be working with $CAT(-k)$ spaces that have the property that for every $x \in X$ there is a vertical geodesic passing through $x$. 
In this case we can view $X$ as a union (not necessarily disjoint) of all vertical geodesics
$$X= \bigcup_{b\in \partial X- \{a\}} b_t.$$

\begin{defn} We endow $X$ with a \emph{height function} (this is just a horofunction) $h: X \to \R$
given by $h(x)=t$ if $x=b_t$. Note that if $x=b_t$ and $x={b'}_{t'}$ then necessarily $t=t'$.
\end{defn}
A quasi-isometry of $X$  induces a quasisymmetric map 
$$\partial f:\partial X -\{a\} \to \partial X -\{f(a)\} $$ with respect to either $d_{a, \mathcal{H}}$ or $\bar{d}_{a, \mathcal{H}}$. Since the two metrics are bilipschitz equivalent we will drop the distinction between them and simply write $d_{\infty}$.
\\

\noindent{}In the following sections we describe all of the $CAT(-1)$ spaces we will be working with. 
\subsection{Negatively curved homogeneous spaces}\label{nchsSEC}\label{boundarySEC}
Let $N$ be a connected, simply connected, nilpotent Lie group. We say that a one parameter group of automorphisms $\varphi_t$ is \emph{contracting} (resp. \emph{expanding}) if for all $g \in N$ we have $\varphi_t(g) \to1$  (resp. $\varphi_{-t}(g) \to 1$) as $t \to \infty$. 

If $\varphi_t:N \to N$ is a one parameter group of expanding automorphisms then
$G_\varphi = N \rtimes_\varphi \R$ is a negatively curved homogeneous space when endowed with an appropriate left invariant Riemannian metric \cite{Hein,CCMT}.
By \cite{Hein} we have that, up to rescaling the metric, all $N \rtimes_\varphi \R$ can be endowed with a $CAT(-1)$ metric. The geometry of these spaces have been studied by \cite{FM3,P2,Reiter}. Some of their boundaries have been analyzed in \cite{D1,DP,X1,X2,X3,X4}.

Since $G_\varphi$ is negatively curved, we can consider its parabolic visual boundary $\partial G_\varphi-\{\infty\}$ where $\infty$ is chosen so that 
vertical geodesics are given by 
$$\gamma_g(t)=(g,t)\in N \rtimes_\varphi \R.$$
We can identify $\partial G_\varphi-\{\infty\}$ with $N$
and then the height function is simply given by $h(g,t)=t$.

To get a better grasp of the metric $d$ on $G_\varphi$ we define a quasi-isometrically equivalent metric $\bar{d}$ on $G_\varphi$ that is easier to work with than a Riemannian metric. 
Let $d_N$ be a metric induced by a left invariant metric on $N$ and for each $t \in \R$ set  
$$d_t(g_1,g_2)= d_N(\varphi_{-t}(g_1), \varphi_{-t}(g_2))=\| \varphi_{-t}(g_1^{-1}g_2) \|.$$
Let $\bar{d}$ be the largest metric on $G_\varphi$ such that the vertical geodesics given above are actually geodesics 
and the distance on each level set $h^{-1}(t) =N \times \{t\} \simeq N$ is $d_t$. (See \cite{G} Section 1.3).
Note also that the level set $N\times \{t\}$ with the metric $d_t$ is exponentially distorted in $G_\varphi$. Using the metric $\bar{d}$ it is easy to verify that 
for two distinct vertical geodesics $\gamma_{g_0}$ and $\gamma_{g_1}$ we have 
$$\lim_{t \to  \infty} d(\gamma_{g_0}(-t),\gamma_{g_1}(-t)) = \infty \textrm{ and }
\lim_{t \to \infty} d(\gamma_{g_0}(t),\gamma_{g_1}(t)) = 0.$$
Note that if $d_{t_0}(g_0,g_1)=1$ then $1/K-C \leq d((g_0,t_0),(g_1,t_0))\leq K+C$
so that up to bilipschitz equivalence we can interpret the parabolic visual metric as 
$$d_\infty(g_0,g_1) = e^{t_0}$$
where $t_0$ is the smallest value at which $d_{t_0}(g_0,g_1)=1$. 
See Section 5 of \cite{X1} for more details.  

\subsubsection{Snowflaking}
It is worthwhile to note that by reparametrizing $\varphi_t$ as $\varphi'_t=\varphi_{\alpha t}$ we get boundary metrics on $G_\varphi$ and $G_{\varphi'}$ that are \emph{snowflake} equivalent. In particular $$d_{\infty,\varphi} = d_{\infty,\varphi'}^\alpha.$$
Note that there is always a range of admissible $\alpha$ that ensure that $d_{\infty,\varphi'}^\alpha$ is actually a metric. 
Alternatively we could have chose the base $a=e^\alpha$ instead of $e$ in our definition of boundary metric. 
This will be important later when we define the millefeuille space (see Section \ref{millefeuilleSEC}).

\subsection{Trees}
Any tree $T$ is a $CAT(-1)$ space so again by fixing a point $\xi$ at infinity we can define the parabolic visual boundary with respect to $\xi$. Picking a point at infinity induces an orientation on edges (towards the point at infinity). This in turn induces a height function: designate a base point vertex to be at height zero then use orientation to determine the heights of all of the other vertices. Vertical geodesics in $T$ are the geodesics that are compatible with the height function. The parabolic visual boundary is again just the set of vertical geodesics and in this case the Euclid-Cygan metric can be interpreted as the $e^{t_0}$ where $t_0$ is the height at which the two vertical geodesics first coincide. Note that for a tree 
we can define a parabolic visual metric $a^{t_0}$ for any base $a>1$ and still have it be a metric.

\subsection{Millefeuille space}\label{millefeuilleSEC}
Let $T_{m+1}$ be the regular $m+1$ valent tree with orientation such that each vertex has $m$ incoming edges and one outgoing edge. With this orientation there is a natural choice for $\infty \in \partial T_{m+1}$. Again, vertical geodesics are given by the coherently oriented infinite geodesics. In this case we can identify 
$\partial T_{m+1} - \{\infty\}$ with the $m$-adics $\Q_m$ (see \cite{FM1} for the identification). 

\begin{defn}[Millefeuille space] Let $G_\varphi=N \rtimes_\varphi \R$ be a negatively curved homogeneous space with height function $h_\varphi:G_{\varphi}\to \R$.  
Let $h_m: T_{m+1} \to \R$ be a height function.
The millefeuille space is defined to be
$$X_{N,\varphi,m} = \{ (x,y) \in G_\varphi \times T_{m+1} \mid h_\varphi(x)=h_m(y)\}$$ 
with the induced path metric rescaled by a factor of $1/\sqrt{2}$. 
\end{defn}
Alternatively we can view $X_{N,\varphi,m}$ as the metric fibration 
$$\pi:X_{N,\varphi,m} \to T_{m+1}$$
 where  $\pi^{-1}(\ell)$ is identified with  $G_\varphi$ for each coherently oriented line $\ell$ in $T_{m+1}$ via a height-preserving isometry. See section 7.2 \cite{FM3} for a precise definition of metric fibration.

This space $X_{N,\varphi,m}$ was first defined in Section 7 of \cite{CCMT}.  In that section, it is also noted that $X_{N,\varphi,m}$ is a $CAT(-k)$ space if $G_\varphi$ is $CAT(-k)$. This is because locally  $X_{N,\varphi,m}$ is obtained by glueing $m$ copies of $G_\varphi$, along closed convex subsets (namely along horoballs of $G_\varphi$).

\begin{defn}
Following the terminology coined by Farb-Mosher in \cite{FM3}, we call each $\pi^{-1}(\ell)$ a \emph{hyperplane} of $X$ and each $\pi^{-1}(v)$ a \emph{horizontal leaf}. 
\end{defn}
%
%

As with $G_\varphi$ and $T_{m+1}$ there is a natural choice of $\infty \in X_{N,\varphi,m}$
such that the vertical geodesics in $X_{N,\varphi,m}$ are precisely the geodesics that project to vertical geodesics in both $G_\varphi$ and $T_{m+1}$. There is also an obvious induced height function $h: X_{N,\varphi,m} \to \R$ given by $h(x,y)=h_\varphi(x)=h_m(y)$.

\begin{prop} \label{bmetricPROP}
For an appropriate rescaling of  $\varphi$, the parabolic visual boundary $\partial X_{N,\varphi,m} - \{\infty\}$ is bilipschitz equivalent to $(N \times \Q_m, d_{\varphi,m})$ where $d_{\varphi,m}$ is the maximum of the metrics $d_\varphi$ (on $N$) and $d_{\Q_m}$ (on $\Q_m$).  
\end{prop}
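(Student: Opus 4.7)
The plan is to compute the parabolic visual metric $\bar d_\infty$ on $\partial X_{N,\varphi,m} - \{\infty\}$ directly from the definition and compare it to the sup metric $\max(d_\varphi, d_{\Q_m})$ on $N \times \Q_m$. First I set up the identification: any vertical geodesic in $X_{N,\varphi,m}$ projects under $\pi$ to a vertical geodesic in $T_{m+1}$ and to a vertical geodesic in $G_\varphi$, giving a pair $(b_N, b_T) \in N \times \Q_m$. Because the height functions $h_\varphi$ and $h_m$ agree on $X_{N,\varphi,m}$, this correspondence is a bijection onto $\partial X_{N,\varphi,m} - \{\infty\}$.

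Given two boundary points $b = (b_N, b_T)$ and $c = (c_N, c_T)$, let $t_N$ be the smallest height such that $d_{t_N}(b_N, c_N) = 1$, so that $d_\varphi(b_N, c_N) = e^{t_N}$; and let $t_T$ be the height at which $\gamma_{b_T}$ and $\gamma_{c_T}$ first coincide in $T_{m+1}$, so that $d_{\Q_m}(b_T, c_T) \asymp e^{t_T}$ (with $t_T = -\infty$ if $b_T=c_T$). The parabolic visual metric is $\bar d_\infty(b,c) = e^{t_0}$ where $t_0$ is the height at which the $X_{N,\varphi,m}$-distance between the two vertical geodesics equals $1$, so the claim reduces to showing $t_0 = \max(t_N, t_T) + O(1)$.

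When $t_N \geq t_T$, for every height $t \geq t_T$ the two vertical geodesics lie in a common hyperplane $\pi^{-1}(\ell)$, which is closed and convex in the $CAT(-k)$ space $X_{N,\varphi,m}$ and, once $\varphi$ is rescaled so the overall curvature is $-1$, carries a metric bilipschitz to $G_\varphi$ itself. The distance within this hyperplane reaches $1$ at a height differing from $t_N$ by a bounded constant (absorbing the $1/\sqrt{2}$ factor), giving $t_0 = t_N + O(1)$. When $t_T > t_N$, at height $t_T$ the two geodesics share a common hyperplane but are $G_\varphi$-close since $d_{t_T}(b_N,c_N) < 1$; for $t < t_T$ they lie in distinct hyperplanes whose only meeting is through the branching vertex at height $t_T$. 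The tree projection $\pi: X_{N,\varphi,m} \to T_{m+1}$ is coarsely Lipschitz, so the $X_{N,\varphi,m}$-distance is bounded below by a constant times the tree distance $2(t_T-t)$, and an explicit path going up to height $t_T$, across inside a single hyperplane, and back down realizes this bound up to an additive constant. Hence $t_0 = t_T + O(1)$.

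The main obstacle is pinning down the ``appropriate rescaling of $\varphi$''. The millefeuille metric is built from a product path metric and then rescaled by $1/\sqrt{2}$, and a further reparametrization of $\varphi_t$ to $\varphi_{\alpha t}$ (in the sense of the snowflaking discussion of Section \ref{nchsSEC}) is needed so that $X_{N,\varphi,m}$ is actually $CAT(-1)$ and the exponential base in the $N$-direction aligns with the base $e$ naturally used on the tree side; only then does the estimate $t_0 = \max(t_N, t_T) + O(1)$ translate into an honest bilipschitz equivalence rather than a weighted or snowflake one. Verifying convexity of the hyperplanes and the coarse-Lipschitz property of $\pi$ relies on the $CAT(-k)$ gluing construction of $X_{N,\varphi,m}$ from \cite{CCMT}.
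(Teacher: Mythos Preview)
Your approach is essentially the paper's: identify vertical geodesics with pairs in $N\times\Q_m$, split into the case where the $N$-factor distance dominates (both geodesics lie in a common hyperplane above the tree branching height, so the $G_\varphi$ visual metric governs) versus the case where the tree distance dominates (the geodesics separate in the tree before they are unit-distance apart in $G_\varphi$, so the $T_{m+1}$ visual metric governs), and then adjust by a snowflake. One correction worth noting: you write $d_{\Q_m}(b_T,c_T)\asymp e^{t_T}$ and describe the rescaling as aligning both sides to base $e$, but the \emph{standard} $m$-adic metric is $m^{t_T}$, so the paper instead snowflakes the entire boundary metric by $\alpha=\ln m$ (equivalently reparametrizes $\varphi_t$ to $\varphi_{t/\ln m}$) in order to recover the standard metric on $\Q_m$; with that adjustment your argument is exactly the paper's.
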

\begin{proof} 
Note that for any two distinct vertical geodesics $\gamma$ and $\gamma'$ we have three possible cases. If $\gamma$ and $\gamma'$ project to the same geodesic in $T_{m+1}$ then $\gamma,\gamma'$ both lie in the same hyperplane $f^{-1}(\ell)\simeq G_\varphi$ in which case we can identify $\gamma,\gamma'$ with vertical geodesics in $G_\varphi$  (namely $\gamma\simeq \gamma_{g_0}$ and $\gamma'\simeq \gamma_{g_1}$ for some $g_0, g_1 \in N$). Then 
$$d_{\infty, X}(\gamma, \gamma')=d_{\infty,G_\varphi}(g_0,g_1).$$
Likewise, if the projection of $\gamma$ and $\gamma'$ to $G_\varphi$ is the same then the two geodesics coincide above some $t_0 \in \Z$. In this case $\gamma$ and $\gamma'$ lie in the same copy of $T_{m+1}$ and so we have 
$$d_{\infty, X}(\gamma, \gamma')=d_{\infty, T_{m+1}}(\gamma,\gamma').$$
Finally the last case is when $\gamma$ and $\gamma'$ project to two different vertical geodesics in both factors. Nevertheless, eventually (above height $t_1$), these two geodesics lie in the same hyperplane $f^{-1}(\ell)\simeq G_\varphi$. Then, above $t_1$, we can identify $\gamma\simeq \gamma_{g_0}$ and $\gamma'\simeq \gamma_{g_1}$ for some $g_0, g_1 \in N$.  
If $d_{t_1}(\gamma(t_1),\gamma'(t_1))\geq 1$ then 
$$d_{\infty, X} (\gamma,\gamma')=d_{\infty,G_\varphi}(g_0,g_1)$$
since then the height at which $\gamma,\gamma'$ are distance one is above $t_1$.
Otherwise, $d_{t_1}(\gamma(t_1),\gamma'(t_1))< 1$ and the boundary metric has the property that 
$$1/K\ d_{\infty,T_{m+1}}(\gamma,\gamma') \leq d_{\infty, X}(\gamma, \gamma') \leq K\ d_{\infty,T_{m+1}}(\gamma,\gamma').$$
Finally, in order to get the standard metric on $\Q_m$ (i.e. $d_{\infty,T_{m+1}}(\gamma, \gamma')=m^{t_0}$ where $t_0$ is the height at which $\gamma,\gamma'$ initially come together) we must snowflake our boundary metric by $\alpha = \ln{m}$. To ensure that is possible we might have to replace $\varphi(t)$ with $\varphi'(t)=\varphi(\frac{1}{\alpha} t)$. (See the comments at the end of Section \ref{boundarySEC}).
\end{proof}

\subsection{Quasi-isometries}\label{QIofNCHS}

\begin{defn}\label{hrDEFN} Let $X$ be $CAT(-1)$ space with height function $h: X \to \R$. 
We say that a quasi-isometry $f:X \to X$ is \emph{height-respecting} if there is a constant $A$ such that $f$ maps any height level set of $h$ to within  Hausdorff distance $A$ of a height level set of $h$
and if the map induced on height is bounded distance from a translation.
In other words there exists a constant $a$ such that if $h(x)=t$ then 
$$-C + t+ a \leq   h(f(x))\leq t + a + C.$$
\end{defn}
It is now a well known fact (Lemma 6.1 \cite{X3} and Theorem 6.1 in \cite{FM1}) that for $G_\varphi$ and $T_{m+1}$  height-respecting quasi-isometries (up to bounded distance) are in one-to-one correspondence with bilipschitz maps of the parabolic visual boundary. Using the same arguments as in Lemma 6.1 \cite{X3} we can see that the same is true for $X_{N,\varphi,m}$.

\begin{prop}\label{hrtobilip} Any height-respecting quasi-isometry of $X_{N,\varphi,m}$ induces a bilipschitz map of the parabolic visual boundary
$$\partial X_{N,\varphi,m}-\{\infty\} \simeq N \times \Q_m.$$
\end{prop}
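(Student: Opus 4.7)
The plan is to follow the template of Lemma 6.1 in \cite{X3}: first define $\partial f$ via the action of $f$ on vertical geodesics, then use the explicit characterization of $d_\infty$ from Proposition \ref{bmetricPROP} to convert the height-respecting property into bilipschitz control of $\partial f$.

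Let $f$ be a $(K,C)$-quasi-isometry that is height-respecting with height-translation constant $a$. Because $f$ shifts heights by $a \pm C$, it preserves the distinguished point at infinity $\infty$ (all vertical geodesics end at $\infty$, and $h \to -\infty$ along them). Hence for any $b \in \partial X_{N,\varphi,m} - \{\infty\}$, the image $f \circ \gamma_b$ of the vertical geodesic $\gamma_b$ is a quasi-geodesic asymptotic to $\infty$; in a $CAT(-1)$ space such a ray has bounded Hausdorff distance from a unique vertical geodesic, and I define $\gamma_{\partial f(b)}$ to be that geodesic. Given $b,c \in \partial X_{N,\varphi,m} - \{\infty\}$, the proof of Proposition \ref{bmetricPROP} shows that $d_\infty(b,c) \asymp e^{t_0}$, where $t_0$ is the smallest height at which $\gamma_b(t_0)$ and $\gamma_c(t_0)$ either (i) lie in a common hyperplane $\pi^{-1}(\ell) \cong G_\varphi$ and are at distance $1$ there, or (ii) coincide in the tree factor. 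I will write the argument for case (i); case (ii) is strictly easier because the tree boundary distance $m^{t_0}$ depends only on the height at which two vertical geodesics first merge, which is preserved exactly up to an additive shift by a height-respecting quasi-isometry.

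Applying $f$, the points $\gamma_b(t_0)$ and $\gamma_c(t_0)$ are sent to points $f(\gamma_b(t_0))$ and $f(\gamma_c(t_0))$ at heights within $C$ of $t_0+a$ and at mutual distance between $1/K - C$ and $K+C$. By the height-respecting property, these image points are within bounded distance of $\gamma_{\partial f(b)}(t_0+a)$ and $\gamma_{\partial f(c)}(t_0+a)$ respectively, so those two points of $\gamma_{\partial f(b)}, \gamma_{\partial f(c)}$ lie in a common hyperplane and are at uniformly bounded above/below distance. Using the fact that within the fiber $\pi^{-1}(\ell) \cong G_\varphi$ the family of metrics $d_t = \|\varphi_{-t}(\cdot)\|$ contracts in a controlled way with $t$ (the eigenvalues of the derivation generating $\varphi$ have positive real part, giving an estimate $d_{t+\delta} \leq C_1 e^{-c_1 \delta} d_t$ for $\delta \geq 0$ and a matching lower bound for bounded $\delta$), the unique height $t_0'$ at which $\gamma_{\partial f(b)}$ and $\gamma_{\partial f(c)}$ are at distance $1$ satisfies $|t_0' - (t_0 + a)| \leq C''$ with $C''$ depending only on $K, C$ and $d_\varphi$. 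Therefore $d_\infty(\partial f(b),\partial f(c)) \asymp e^{t_0'} \asymp e^a \, d_\infty(b,c)$, proving bilipschitz with constant controlled by $e^{a+C''}$.

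The main obstacle is the controlled scaling of $d_t$ inside a hyperplane, since for a general admissible $d_\varphi$ associated to a non-homothetic expanding derivation one does not have exact exponential scaling on the nose. The admissibility of $d_\varphi$, together with the spectral hypothesis on the derivation, is precisely what supplies the two-sided estimate needed to convert the height shift $a$ produced by $f$ into a genuine multiplicative shift of $d_\infty$; this is the one place where the proof really uses the structure of $G_\varphi$ rather than formal properties of CAT$(-1)$ boundaries.
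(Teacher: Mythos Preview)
Your proposal is correct and follows essentially the same approach as the paper, which in fact only sketches the argument: track the height $t_0$ at which two vertical geodesics are at unit distance, observe that a height-respecting quasi-isometry shifts this height to within an additive constant of $t_0+a$, and exponentiate to obtain the bilipschitz bound $d_\infty(\partial f(b),\partial f(c))\asymp e^a\, d_\infty(b,c)$.

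One minor remark: the ``main obstacle'' you single out---the controlled scaling of the horospherical metrics $d_t$ via the spectral hypothesis on the derivation---is not actually needed. The paper's sketch works directly with the ambient $CAT(-1)$ distance $d_X$ rather than with $d_t$, and in any $CAT(-1)$ space the height at which two vertical geodesics are at $X$-distance $1$ differs by a uniformly bounded amount from the height at which they are at any other fixed bounded distance; this is a purely comparison-geometry fact and makes the case split coming from Proposition~\ref{bmetricPROP} and the spectral estimate on $\varphi$ unnecessary.
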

Roughly speaking the proof uses the fact that if for $\xi, \xi' \in \partial X_{N,\varphi,m}-\{\infty\}$  the vertical geodesics $\xi_t, \xi'_t$ are distance $\epsilon$ apart at height $t_0$ then the geodesics corresponding to their images are distance $\epsilon$ apart at height $t_1$ where $-C' + a + t_0 \leq t_1 \leq t_0 + a +C'$. 
Exponentiating this inequality gives the bilipschitz correspondence between distances. 
\\

\noindent{\bf Remark.}
Note that while we prove here that all quasi-isometries of $X_{N,\varphi,m}$ are height-respecting
it is not always the case that all quasi-isometries of $T_{m+1}$ and $G_\varphi$ are height-respecting. This is clear for $T_{m+1}$ however for $G_\varphi$ the answer is more subtle. 
For instance when $\varphi_t(x)=e^tx$ the space  $\R^n \ltimes_{\varphi} \R$ is isometric to $\mathbb{H}^{n+1}$ whose quasi-isometries can be identified with the quasiconformal maps of $S^n$. The other rank one symmetric spaces can also be written as $N \rtimes_\varphi \R$ for the appropriate $N$ and $\varphi$.
In \cite{X1,X2,X3}, Xie showed that when $\R^n \rtimes_\varphi \R$ is not isometric to $\mathbb{H}^{n+1}$ then all quasi-isometries are height-respecting.
In \cite{X4}, he was able to show that the same result is true for certain $N\rtimes_\varphi \R$.  It is an open question whether all quasi-isometries of negatively curved homogeneous spaces that are not isometric to symmetric spaces are height-respecting. \\

\section{Proof of Theorem \ref{mainthm}}\label{proofSEC}
In this section we prove Theorem \ref{mainthm} by proving Proposition \ref{HRQIprop} which states that any quasi-isometry of $X_{N,\varphi,m}$ is height-respecting.
We start with Farb-Mosher's Theorem 7.7 \cite{FM3}.

\begin{thm}[Theorem 7.7 in \cite{FM3}]\label{FMpreserves}
Let $\pi: X \to T$ be a metric fibration over a bushy tree $T$ such that the fibers of $\pi$ are contractible $k$-manifolds for some $k$. Let $f: X \to X$ be a quasi-isometry. Then there exists a constant $A$, depending only on the metric fibration data of $\pi$, the quasi-isometry data of $f$ and $T$ such that
\begin{enumerate}
\item For each hyperplane $P \subset X$ there exists a unique hyperplane $Q \subset X$ such that $d_{\mathcal{H}}(f(P),Q) \leq A$.
\item For each horizontal leaf $L \subset X$ there is a horizontal leaf $L'$ such that $d_{\mathcal{H}}(f(L),L') \leq A$.
\end{enumerate}
\end{thm}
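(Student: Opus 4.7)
The plan is to characterize hyperplanes and horizontal leaves by intrinsic coarse topological properties that are preserved by any quasi-isometry, and then show that the only subsets of $X$ carrying these properties, up to finite Hausdorff distance, are the hyperplanes and leaves themselves.

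First I would isolate the coarse topological fingerprint of a hyperplane $P=\pi^{-1}(\ell)$. Since $\ell$ is a bi-infinite line in the bushy tree $T$ and the fibers of $\pi$ are contractible $k$-manifolds, the restricted fibration $\pi|_P$ exhibits $P$ as coarsely a contractible $(k+1)$-manifold, equivalently a coarse Poincar\'e duality space of dimension $k+1$. Bushiness of $T$ ensures that $\ell$ separates $T$ into two half-trees each containing arbitrarily deep branching, so $P$ coarsely separates $X$ into two complementary thick components, neither contained in any bounded metric neighborhood of $P$.

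Next, I would run a coarse separation argument. Quasi-isometries preserve coarse Poincar\'e duality and coarse separation, so $f(P)$ is again a coarse PD$(k+1)$ subspace that coarsely separates $X$ into two thick pieces. The crux is then to show that any such subspace of $X$ is within finite Hausdorff distance of a unique hyperplane: one pushes $f(P)$ down to $T$ via $\pi$ and uses the $k$-dimensionality of the fibers, together with the bushiness of $T$, to force $\pi(f(P))$ to lie within bounded distance of a unique bi-infinite geodesic in $T$, whose preimage is the required $Q$. Uniqueness of $Q$ is automatic because two distinct hyperplanes diverge at linear rate in the Hausdorff metric. For horizontal leaves, I would characterize $L=\pi^{-1}(v)$ as the coarse intersection of three hyperplanes $\pi^{-1}(\ell_1), \pi^{-1}(\ell_2), \pi^{-1}(\ell_3)$ whose defining lines meet only at $v$; bushiness (every vertex is within bounded distance of one of valence at least three) guarantees such a triple exists. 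Applying the hyperplane statement to each $\ell_i$ yields lines $\ell_i'$ whose coarse common intersection must, again by bushiness and a combinatorial check in $T$, concentrate at a single vertex $v'$, and $L'=\pi^{-1}(v')$ is the desired leaf.

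The main obstacle is the rigidity step above: ruling out exotic coarse PD$(k+1)$ subspaces of $X$ that separate correctly but are not close to any hyperplane. This is where coarse algebraic topology (coarse Alexander duality and PD-pair arguments of the Farb--Mosher / Kapovich--Kleiner type) must interact delicately with the bushiness of $T$ and the metric fibration data to rule out spreading of $f(P)$ across many lines of $T$. Once that step is in hand, uniqueness of $Q$ and the treatment of leaves reduce to finite Hausdorff distance bookkeeping in $T$, and the constant $A$ emerges by tracking quantitative versions of the coarse topological invariants through the argument.
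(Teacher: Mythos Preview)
The paper does not prove this statement; it is quoted verbatim as Theorem~7.7 of Farb--Mosher \cite{FM3} and used as a black box. The only hint the paper gives about its proof is the remark at the end of Section~\ref{proofSEC}, where Theorem~7.3 of \cite{FM3} (restated here as Theorem~\ref{FM7.3}) is recorded as the ingredient on which Theorem~7.7 rests: any uniformly proper embedding of a contractible $(k{+}1)$-manifold into $X$ lies at bounded Hausdorff distance from a unique hyperplane.

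Your sketch is in fact a reasonable outline of the Farb--Mosher argument, and you have correctly located the heart of the matter. The ``rigidity step'' you flag as the main obstacle is exactly Theorem~\ref{FM7.3}: once one knows that any coarse PD$(k{+}1)$ subspace (more precisely, any uniformly properly embedded contractible $(k{+}1)$-manifold) is Hausdorff-close to a hyperplane, part~(1) follows because a hyperplane $P$ is such a manifold and $f$, being a quasi-isometry, is a uniformly proper embedding of $P$ into $X$. Your proposed route to part~(2) via triple intersections of hyperplanes over a branch point of $T$ is also the standard one. So there is no discrepancy to report: your plan matches the method the paper invokes by citation, and the genuine work---the coarse Alexander duality / packing argument that pins $\pi(f(P))$ to a single line in $T$---is precisely the content of \cite[Theorem~7.3]{FM3}, which neither you nor the present paper attempts to reprove.
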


We apply this theorem to $X=X_{N,\varphi,m}$ and $T=T_{m+1}$. 
Note that there are two possible types of hyperplanes in $X_{N,\varphi,m}$.
If $\ell$ is a coherently oriented geodesic in $T_{m+1}$ then $f^{-1}(\ell) \simeq G_\varphi$. Otherwise, $\ell$ changes orientation exactly once (say at $t_0$) so $f^{-1}(\ell) \simeq H$ is a doubled horoball complement (i.e. the union of two horoball complements $ \{ (g,t) \mid t \leq t_0\} \subset G_\varphi$ identified along the horosphere $N \times \{t_0\}$.)
Note also that if we endow $H$ with the induced metric from $X$ then $H$ is not a geodesic metric space. For instance, the points $p=(g_0, t_0)$ and $q=(g_1, t_0)$ are not connected by a geodesic. In fact, the induced path metric distorts the distances between $p$ and $q$ exponentially.  We will use this fact to prove the following proposition.
 
\begin{prop}\label{COHprop} A quasi-isometry $f:X_{N,\varphi,m}\to X_{N,\varphi,m}$ takes coherently oriented hyperplanes  to coherently oriented hyperplanes.
\end{prop}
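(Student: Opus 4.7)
The plan is to invoke Theorem \ref{FMpreserves} and then distinguish the two types of hyperplanes by a coarse geometric invariant preserved by quasi-isometries: uniform quasi-convexity inside $X_{N,\varphi,m}$. By Theorem \ref{FMpreserves} applied to $f$, for each hyperplane $P \subset X_{N,\varphi,m}$ there is a hyperplane $Q$ with $d_{\mathcal{H}}(f(P),Q) \leq A$, where $A$ depends only on the metric fibration data and the QI-constants of $f$. Given a coherent $P \simeq G_\varphi$, I want to show the associated $Q$ is forced to be coherent.

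The first step is to observe that a coherent hyperplane $P$ is a \emph{convex} subspace of $X_{N,\varphi,m}$. Since $X_{N,\varphi,m}$ is $CAT(-k)$ and locally obtained by gluing $m$ copies of $G_\varphi$ along horoballs, and since $P$ corresponds to following the coherent direction through each gluing without branching, any $X$-geodesic between two points of $P$ remains in $P$. Combining this convexity with $f$ and the Morse lemma in the Gromov hyperbolic space $X_{N,\varphi,m}$, I will deduce that $Q$ is uniformly quasi-convex in $X_{N,\varphi,m}$ with constants depending only on $A$, the hyperbolicity constant, and the QI-data of $f$. The argument: given $q_1,q_2 \in Q$, choose $p_1,p_2 \in P$ with $d_X(f(p_i),q_i)\leq A$; the $X$-geodesic $[p_1,p_2]$ lies in $P$ by convexity; its $f$-image is a quasi-geodesic in $X$ from $f(p_1)$ to $f(p_2)$ lying within $A$ of $Q$; the Morse lemma then forces the actual $X$-geodesic $[q_1,q_2]$ to stay in a uniformly bounded neighborhood of $Q$.

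Now I assume for contradiction that $Q$ is non-coherent, so $Q \simeq H$ is a doubled horoball complement capped at some height $t_0$. I choose $q_1=(g_0,t_0)$ and $q_2=(g_1,t_0)$ on the gluing horosphere of $H$ inside a common coherent hyperplane of $X_{N,\varphi,m}$ that extends above height $t_0$, and I take $R := d_{t_0}(g_0,g_1)$ arbitrarily large. The $X$-geodesic $[q_1,q_2]$ then coincides with the $G_\varphi$-geodesic within that adjacent coherent hyperplane, which attains maximum height approximately $t_0 + c\log R$ for some $c>0$ depending only on $\varphi$; this is exactly the exponential-versus-linear phenomenon flagged in the discussion of the induced path metric on $H$. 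Since every point of $H$ has height at most $t_0$, the peak of this geodesic lies at $X$-distance at least $c\log R$ from $H$. For $R$ sufficiently large this contradicts the uniform quasi-convexity constant obtained in the previous step, so $Q$ must be coherent.

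The main obstacle I anticipate is the bookkeeping in the Morse-lemma step: one must verify that the quasi-geodesic constants of $f$-images of $P$-geodesics, the $\delta$-hyperbolicity of $X_{N,\varphi,m}$, the constant $A$, and the implicit additive errors all depend only on ambient data, so that the comparison between the fixed quasi-convexity bound and the unbounded quantity $c\log R$ yields a genuine contradiction. Verifying that $P$ is genuinely geodesically convex (not merely quasi-convex) is the other place requiring care; note however that even replacing convexity by uniform quasi-convexity of $P$ would be enough, since the failure of quasi-convexity for $H$ is by an unbounded margin.
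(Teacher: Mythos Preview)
Your proposal is correct and rests on the same underlying observation the paper uses: a coherent hyperplane $P\simeq G_\varphi$ is geodesically embedded in $X_{N,\varphi,m}$, while a doubled horoball complement $H$ is exponentially distorted, and a quasi-isometry cannot collapse this distinction. The packaging of the contradiction, however, differs. The paper argues via \emph{path lengths}: after making $f$ Lipschitz on rectifiable paths, the $P$-geodesic between preimages of two horosphere points $p,q\in H$ has length $\asymp d(p,q)$, its $f$-image has length $\lesssim d(p,q)$, yet any path in a bounded neighborhood of $H$ joining $p$ and $q$ has length $\gtrsim \exp(d(p,q))$. You instead argue via \emph{quasi-convexity}: convexity of $P$ plus the Morse lemma forces $Q$ to be uniformly quasi-convex, but the $X$-geodesic between two horosphere points of $H$ climbs to height $\sim t_0+c\log R$, violating any fixed quasi-convexity constant. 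These are dual formulations of the same exponential-versus-linear phenomenon; your version is perhaps more conceptual and invokes the ambient hyperbolicity explicitly, while the paper's is slightly more hands-on and avoids naming the Morse lemma. Your closing remark that quasi-convexity of $P$ (rather than strict convexity) would already suffice is correct and matches the paper's actual usage, which only needs that $P$ is geodesically embedded.
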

\begin{proof} 
Let $P\subset X_{N,\varphi,m}$ be a coherently oriented hyperplane. 
Suppose $$f(P)\simeq Q$$ is a doubled horoball complement. 
Note that $f$ maps horizontal leaves in $P$ to within distance $A$ of horizontal leaves in $Q$. Let $p',q'$ be two points that are mapped to within distance $C$ of $p=(g_0,t_0)$ and $q=(g_1,t_0)$. Without loss of generality, we can assume that $p'=(g'_0,t'_0)$ and $q'=(g'_1,t'_0)$ for some $t_0'$.
Recall that by slightly enlarging the quasi-isometry constants we can assume that $f$ and its coarse inverse $\bar{f}$ are actually Lipschitz (see for example \cite{FM1}). This implies that $f$ (and $\bar{f}$) sends rectifiable paths of length $L$ to rectifiable paths of length at most $KL$.

Note that any path connecting $p$ and $q$ that lies in a $C$-neighborhood of $Q$ in $X$ has length at least $K'\exp(d(p,q))$ for some $K'>0$.
Note also that if $d_P$ denotes the distance in $P$ then the geodesic $\gamma$ between $p'$ and $q'$ has length 
$$d(p',q')=d_P(p',q')$$ 
since $P\subset X_{N,\varphi,m}$ is geodesically embedded. Now $len(\gamma)=d(p',q')$ and
$d(p',q') \leq Kd(p,q)+C''$ 
so 
$$len(f(\gamma)) \leq K len(\gamma)\leq K^2 d(p,q)+C'''.$$
But $f(\gamma)$ must lie in a $C$-neighborhood of $Q$ so 
$$len(f(\gamma)) \geq K' \exp(d(p,q)).$$
This implies that 
$$ K' \exp(d(p,q)) \leq K^2 d(p,q)+C'''$$ 
which is not possible if $d(p,q)$ is large enough. 
\end{proof}\\
Now we know that for each coherently oriented $P\subset X_{N,\varphi,m}$, the quasi-isometry $f$ induces a quasi-isometry of $G_\varphi$ that permutes height level sets.
From Proposition 5.8 in \cite{FM3} and Theorem 33 in \cite{Reiter} we have that any quasi-isometry of $G_\varphi$ that permutes height level sets is height-respecting where the constants  $A,a,C$ in Definition \ref{hrDEFN} depend only on  $G_\varphi$ and the quasi-isometry constants of $f$.
This in turn implies that $f$ induces a height-respecting quasi-isometry of $T_{m+1}$ and more generally of $X_{N,\varphi,m}$. 

To finish the proof of Theorem \ref{mainthm} we appeal to Proposition \ref{hrtobilip}. Since every quasi-isometry of $X_{N,\varphi,m}$ is height-respecting then any quasisymmetric map of $Y \simeq \partial X_{N,\varphi,m}$ is  bilipschitz.  \\

\noindent{\bf Remark.} Theorem \ref{FMpreserves}
(Theorem 7.7 from \cite{FM3}) relies on Theorem 7.3 from \cite{FM3} which we state below. As noted in the introduction. Theorem \ref{mainthm} should hold for the boundaries of any metric fibration of a tree with a negatively curved space satisfying Theorems 7.3 and 7.7.

\begin{thm}[Theorem 7.3 in \cite{FM3}]\label{FM7.3}
Let $\pi: X \to T$ be a metric fibration whose fibers are contractible $k$-manifolds for some $k$. Let $P$ be a contractible $(k+1)$-manifold which is a uniformly contractible, bounded-geometry, metric simplicial complex. Then for any uniformly proper embedding $\phi: P \to X$, there exists a unique hyperplane $Q \subset X$ such that $\phi(P)$ and $Q$ have finite Hausdorff distance in $X$. The bound on Hausdorff distance depends only on the metric fibration data for $\pi$, the uniform contractibility data and the bounded geometry data for $P$, and the uniform properness data for $\phi$.
\end{thm}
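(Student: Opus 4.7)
The plan is to reduce the theorem to a coarse-topological statement about the projection $\psi := \pi \circ \phi : P \to T$, then recover the desired hyperplane from it. The key claim is that $\psi(P)$ stays within bounded Hausdorff distance of a unique bi-infinite geodesic $\ell \subset T$; the hyperplane $Q$ will then be $\pi^{-1}(\ell)$.

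To establish the key claim, I would work vertex by vertex in $T$. Fix $v \in T$, let $F_v := \pi^{-1}(v)$ (a contractible $k$-manifold), and set $S_v := \phi^{-1}(\N_{R_0}(F_v))$. Uniform properness of $\phi$ makes $S_v$ a coarsely $k$-dimensional subset of the $(k+1)$-manifold $P$, and the components of $T \setminus \{v\}$ into which $\psi(P)$ coarsely extends correspond bijectively to the unbounded components of $P \setminus S_v$. The decisive input here is a coarse Alexander-duality / separation theorem of the type developed by Kapovich--Kleiner: a coarsely $k$-dimensional, properly embedded subset of a uniformly contractible, bounded-geometry contractible $(k+1)$-manifold has exactly two unbounded complementary components. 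This simultaneously rules out branching ($\geq 3$ directions at $v$) and dead-ending ($\leq 1$ direction at $v$), and combined with unboundedness of $\psi(P)$ (immediate from noncompactness of $P$ and uniform properness of $\phi$), it forces $\psi(P)$ to lie within a bounded neighborhood of a unique bi-infinite geodesic $\ell$.

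Setting $Q := \pi^{-1}(\ell)$, the inclusion $\phi(P) \subset \N_A(Q)$ is immediate from $\psi(P) \subset \N_R(\ell)$ and the metric fibration structure of $\pi$. The reverse inclusion $Q \subset \N_A(\phi(P))$ I would deduce by a filling argument: $Q$ is itself a contractible $(k+1)$-manifold, and $\phi(P)$ is a uniformly proper image of another $(k+1)$-manifold lying in a bounded neighborhood of $Q$; a local invariance-of-domain / covering argument (with constants governed by the uniform contractibility and bounded-geometry data) then forces $\phi(P)$ to fill $Q$ up to bounded error. Uniqueness of $Q$ is immediate because two distinct bi-infinite geodesics in a tree eventually diverge, so their $\pi$-preimages cannot both lie a bounded Hausdorff distance from the same set $\phi(P)$.

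The hard part is the coarse separation statement in the first step: the precise assertion that a coarsely $k$-dimensional subset of a uniformly contractible, bounded-geometry contractible $(k+1)$-manifold has exactly two unbounded complementary components, with constants controlled by the geometric data. All of the hypotheses on $P$ and on the fibers of $\pi$ ultimately serve to make this coarse Alexander-duality computation go through, and careful tracking of the uniform constants throughout is what ensures the final Hausdorff distance bound depends only on the data stated in the theorem.
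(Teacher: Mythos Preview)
The paper does not prove this statement at all: it is quoted verbatim as Theorem~7.3 from \cite{FM3} (Farb--Mosher) and used as a black box underlying Theorem~\ref{FMpreserves}. There is therefore no ``paper's own proof'' to compare your proposal against.

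For what it is worth, your outline is broadly in the spirit of the original Farb--Mosher argument: one does project to the tree via $\pi\circ\phi$, and the heart of the matter is indeed a coarse separation/Alexander-duality statement forcing the image in $T$ to lie near a single bi-infinite line. But since this paper simply cites the result, any assessment of your sketch would have to be made against \cite{FM3} itself rather than against the present paper.
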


\subsection*{\bf Remark on the proof of  Corollaries \ref{QIcor1} and \ref{QIcor2}.}

Instead of only considering maps from a single millefeuille space $X_{N,\varphi, m}$ to itself 
we can easily adapt all definitions and theorems to cover the case of maps from $X_{N,\varphi, m}$ to another millefeuille space $X_{N',\varphi', m'}$. Indeed Theorem  \ref{FMpreserves} (Theorem 7.7 in \cite{FM3}) is originally stated in this this form. 
Proposition \ref{COHprop} also goes through unchanged and furthermore it gives us that any quasi-isometry $f: X_{N, \varphi,m} \to X_{N',\varphi',m'}$
induces a height-respecting quasi-isometry from $G_\varphi=N \rtimes_\varphi \R$ to $G_{\varphi'}=N' \rtimes_{\varphi'} \R$.  In the special case when $N,N'=\R^n$ and $\varphi, \varphi'$ are given by multiplication by one parameter subgroups $M^t, M'^{t} \in GL_n(\R)$ we know from \cite{FM3} that  height-respecting quasi-isometries  between $G_{\varphi}$ 
and $G_{\varphi'}$ exist if and only if the absolute Jordan forms of $M$ and $M'$ are powers of a common  matrix. 
Furthermore this height respecting quasi-isometry induces a map on height that is bounded distance from 
the affine map $t \mapsto ax+b$  where $a=\ln \alpha_1/\ln \alpha_1'$ and $\alpha_1, \alpha_1'$ are the smallest eigenvalues of $M$ and $M'$ respectively (Proposition 5.8 in \cite{FM3}). 

 If we modify the definition of height-respecting quasi-isometry to include maps 
from $X_{N,\varphi,m}$ to $X_{N',\varphi',m'}$ then  Proposition \ref{hrtobilip} also holds provided we chose the correct exponents for the boundary metrics.
This proposition combined with Lemma \ref{decompLEM} below shows us that any quasi-isometry between  
$X_{N,\varphi, m}$ and $X_{N',\varphi', m'}$
induces a bilipschitz equivalences between $\Q_m$ and $\Q_{m'}$. However, we know from the appendix of  \cite{FM1} that this is only possible if $m$ and $m'$ are powers of a common base. Finally, since the scaling must match up on both factors simultaneously we must have that $i-j=a= \ln \alpha_1/\ln \alpha_1'$.
These observations prove Corollaries \ref{QIcor1} and \ref{QIcor2}. 

%
%
\appendix

\section{Uniform subgroups of $Bilip_M(\R^n \times \Q_m)$}\label{unifSEC}
Let $N\simeq \R^n$ and $\varphi=\varphi_M$ where $\varphi_M(v)=Mv$ for some $M \in GL_n(\R)$ with all eigenvalues of norm greater than one. Without loss of generality we can assume that $M$ is in absolute Jordan form. Suppose $e^{\alpha_i}$ for $1 \leq i \leq r$ are the eigenvalues of $M$ with $\alpha_i < \alpha_{i+1}$. In case $M$ is diagonal, we give an explicit form for the visual metric on $\R^n \simeq \partial G_\varphi$. Let $v,w \in \R^n$ and $|\Delta x_i|$ the norm of the difference of $v$ and $w$ in the $e^{\alpha_i}$ eigenspace. Then a parabolic visual metric can be given by
$$D_M(v,w) :=d_\infty(v,w)= \max_i \{ |\Delta x_i|^\frac{\alpha_1}{\alpha_i}\}.$$
In case $M$ is not diagonalizable the metric $D_M$ is more cumbersome to write down so we refer the reader to \cite{DP} and \cite{X3}.

Note that by Proposition \ref{bmetricPROP}, a parabolic visual metric on the boundary of $X_{\R^n,\varphi,m}$ can be given by  
$$D_{M,m}((v,y),(v',y'))=\max \{ D_M(v,v'), d_{\Q_m}(y,y')\}$$
where $d_{\Q_m}$ is the standard metric on the $m$-adics. In this section we study subgroups of bilipschitz maps of $\R^n \times \Q_m$ with respect to the metric $D_{M,m}$.

%
%
\begin{notation} Denote by
\begin{itemize}
\item $Bilip_M(\R^n \times \Q_m)$ the set of all bilipschitz maps of $\R^n \times \Q_m$ with respect to the metric $D_{M,m}$,
\item $Sim_M(\R^n \times \Q_m)$, the set of similarities with respect to $D_{M,m}$,
\item $ASim_{{M}}(\R^n\times \Q_m)$ 
the set of similarities composed with \emph{almost translations} (i.e. maps in $Bilip_M(\R^n \times \Q_m)$ of the form
$$(x_1,x_2, \cdots , x_r,y) \mapsto (x_1 + B_1(x_2, \cdots , x_r,y), \cdots , x_r + B_r(y), y).$$
Here the $B_i$ are $\frac{\alpha_i}{\alpha_j}$-H\"older in each $x_j$ and Lipschitz in $y$). 
\end{itemize}
\end{notation}
For our purposes we will call a subgroup of bilipschitz maps uniform if it is uniform as a group of quasi-similarities (see Section \ref{prelimSEC}).

We will now prove Theorem \ref{anothertukia} stated in the introduction.\\

\noindent{\bf Theorem \ref{anothertukia}}\emph{ Let $U$ be a uniform separable subgroup of $Bilip_{{M}}(\R^n \times \Q_m)$ that acts cocompactly on the space of distinct pairs of points of $\R^n \times \Q_m$. Then 
$U$ can be conjugated by a bilipschitz map into $ASim_{{M}}(\R^n\times \Q_p)$ for some $p$.
}\\

\noindent{}Before beginning the proof of this theorem it is worth mentioning that after some initial setup this proof follows very closely the proofs of Theorem  2 in \cite{D1} and Theorem 2 in \cite{DP}. For this reason we will provide only an outline of the proof and fill in the details of where the proofs differ. Furthermore when we do need to refer to the specifics of $D_M$, for simplicity we will only refer to the diagonal case.  What is new in this Theorem is the introduction of the $\Q_n$ coordinate. Note also that $p$ might be different from $m$.

\begin{lemma}\label{decompLEM} If $f \in Bilip_M(\R^n \times\Q_m)$ then $f$ decomposes as 
$$f(x,y)=(f_1(x,y), f_2(y))$$
where $f_1$ is  bilipschitz with respect to ${D_M}$ in the $x$ coordinate and Lipschitz in $y$ with respect to $d_{\Q_M}$. The map $f_2$ is bilipschitz in $y$ with respect to $d_{\Q_M}$.
\end{lemma}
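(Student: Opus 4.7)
The plan is to exploit the topological contrast between the two factors: $\Q_m$ is totally disconnected (being a non-archimedean local field) while $D_M$ induces the standard topology on $\R^n$, so $\R^n$ is connected. Since $D_{M,m}$ is the max of the two metrics, two points in distinct fibers $\R^n \times \{y\}$, $\R^n \times \{y'\}$ are at $D_{M,m}$-distance at least $d_{\Q_m}(y,y')>0$, and within a fiber the metric restricts to $D_M$. Hence the connected components of $(\R^n \times \Q_m, D_{M,m})$ are precisely the fibers $\R^n \times \{y\}$.

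Next I would note that $f$ is a homeomorphism and therefore permutes connected components. This produces a well-defined map $f_2 : \Q_m \to \Q_m$ with $f(\R^n \times \{y\}) = \R^n \times \{f_2(y)\}$, so $f$ automatically has the form $f(x,y) = (f_1(x,y), f_2(y))$.

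To read off the quantitative estimates, I would insert suitable pairs into the $K$-bilipschitz inequality
$$\tfrac{1}{K}\, D_{M,m}(p,q) \;\leq\; D_{M,m}(f(p),f(q)) \;\leq\; K\, D_{M,m}(p,q).$$
Taking $p=(x,y)$, $q=(x',y)$, both sides of the max collapse to their first arguments: $D_{M,m}(p,q)=D_M(x,x')$ and $D_{M,m}(f(p),f(q))=D_M(f_1(x,y),f_1(x',y))$ because the second coordinates agree. This immediately yields that $f_1(\cdot,y)$ is $K$-bilipschitz with respect to $D_M$, uniformly in $y$. Taking instead $p=(x,y)$, $q=(x,y')$ with $y\ne y'$, the left side collapses to $d_{\Q_m}(y,y')$, and the upper bound gives
$$\max\{D_M(f_1(x,y),f_1(x,y')),\, d_{\Q_m}(f_2(y),f_2(y'))\} \;\leq\; K\, d_{\Q_m}(y,y'),$$
which simultaneously shows $f_1$ is $K$-Lipschitz in $y$ and $f_2$ is $K$-Lipschitz.

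Finally, for the lower bilipschitz bound on $f_2$, I would apply the same argument to the bilipschitz inverse $f^{-1}$: its induced decomposition has second coordinate $f_2^{-1}$, which by the previous step is $K$-Lipschitz, and this is exactly the missing lower bound for $f_2$. I do not foresee a real obstacle here; the only subtle point is the initial topological observation that the totally disconnected factor forces the second coordinate of any homeomorphism of $\R^n\times \Q_m$ to depend only on $y$, after which all the bilipschitz estimates follow mechanically by choosing test pairs that isolate one coordinate at a time.
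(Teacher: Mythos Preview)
Your argument is correct and follows the same route as the paper: the paper's proof is precisely the topological observation that the connected components of $\R^n \times \Q_m$ are the fibers $\R^n \times \{y_0\}$, so a homeomorphism must permute them and hence have the stated form. You have simply written out the quantitative bilipschitz estimates that the paper leaves implicit.
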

\begin{proof} In this case the proof is simply topological. Since the connected components of $\R^n \times \Q_m$ are given by $\R^n \times \{y_0\}$ for each $y_0 \in \Q_m$ the map $f$   maps $\R^n \times \{y_0\}$ to $\R^n \times \{y_1\}$ and induces a bilipchitz map between these two sets. 
\end{proof}\\
Note that combining this with the structure of $Bilip_{D_M}$ maps from Proposition 4 in  \cite{D1} we see that 
$f$ has the form 
$$f(x_1, \ldots, x_r, y)=(f_1(x_1, \ldots, x_r,y), \ldots, f_r(x_r,y), g(y))$$
where $f_i$ are 
$\frac{\alpha_i}{\alpha_j}$-H\"older in each $x_j$ and Lipschitz in $y$.

In the proof of Theorem \ref{anothertukia} we combine Lemma \ref{decompLEM} along with the following theorem from \cite{MSW1}.
\begin{thm}[Theorem 7 in \cite{MSW1}]\label{MSWconj} Suppose that $U \subseteq Bilip(\mathbb{Q}_m)$ is a uniform subgroup. Suppose in addition that the induced action of $U$ on the space of distinct pairs in $\mathbb{Q}_m$  is cocompact. Then there exists $p \geq 2$ and a bilipschitz homeomorphism $\mathbb{Q}_m \mapsto \mathbb{Q}_p$ which conjugates $H$ into the similarity group $Sim(\mathbb{Q}_p)$. 
\end{thm}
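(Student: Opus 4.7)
The plan is to translate the statement into the geometry of the regular $(m+1)$-valent tree $T_{m+1}$ and then invoke quasi-isometric rigidity for regular trees. First, I would use the identification $\Q_m \simeq \partial T_{m+1} - \{\infty\}$ recalled in Section \ref{catSEC}, together with the tree analogue of Proposition \ref{hrtobilip} giving a bijection (up to bounded distance) between height-respecting quasi-isometries of $T_{m+1}$ and bilipschitz maps of $\Q_m$. Under this correspondence the uniform bilipschitz constants of $U$ produce uniform quasi-isometry constants, so $U$ lifts to a uniform quasi-action on $T_{m+1}$ by height-respecting quasi-isometries.

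Second, I would argue that cocompactness on distinct pairs of $\Q_m$ forces coboundedness of the lifted quasi-action on $T_{m+1}$. The key observation is that an ordered pair of distinct points $(x,y)\in \Q_m\times\Q_m$ corresponds canonically to the vertex $v\in T_{m+1}$ where the vertical rays to $x$ and $y$ first coalesce on the way to $\infty$, together with the two distinct incoming edges at $v$. Because $T_{m+1}$ has bounded valence, a cocompact action on the pair space translates directly into a cobounded quasi-action on the vertex set of $T_{m+1}$.

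Third, I would invoke the Mosher-Sageev-Whyte quasi-isometric rigidity theorem for bushy trees: any uniform cobounded quasi-action on $T_{m+1}$ is quasiconjugate to a proper cocompact isometric action on some bounded-valence bushy tree $T'$. Since the quasi-action in question is by height-respecting maps, the conjugating quasi-isometry $\Phi$ can be chosen height-respecting, and the isometric action on $T'$ fixes a distinguished end. Using the bounded valence of $T'$ together with the fact that the parabolic stabilizer of $\infty$ acts transitively on horospheres, I would then promote $T'$ to a regular $(p+1)$-valent tree $T_{p+1}$ for some $p\ge 2$ (possibly after a further bilipschitz modification collapsing/subdividing edges).

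Finally, pushing the height-respecting $\Phi: T_{m+1}\to T_{p+1}$ to the boundary gives a bilipschitz homeomorphism $\Q_m\to\Q_p$ which conjugates $U$ into the group of height-respecting isometries of $T_{p+1}$ fixing $\infty$. On the boundary these are exactly the similarities, so $U$ is conjugated into $Sim(\Q_p)$. The main obstacle is the third step: the actual rigidity argument, in particular the passage from a cobounded uniform quasi-action to an isometric model and then to a \emph{regular} tree. This is the technical heart of the Mosher-Sageev-Whyte framework, requiring an averaging/limiting construction of the conjugating quasi-isometry and a Stallings-type analysis of the vertex stabilizers of the resulting isometric action to control the valence.
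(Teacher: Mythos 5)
This statement is quoted verbatim from \cite{MSW1} (Theorem 7 there) and is used in the paper purely as a black box in the proof of Theorem \ref{anothertukia}; the paper supplies no proof of it, so there is no internal argument to compare yours against. That said, your sketch is in outline the actual Mosher--Sageev--Whyte strategy: promote the uniform bilipschitz group on $\Q_m$ to a uniform, cobounded quasi-action on $T_{m+1}$ by height-respecting quasi-isometries, invoke their rigidity theorem for quasi-actions on bounded-valence bushy trees to obtain a quasiconjugate isometric action on a tree fixing an end, and read the conjugation off at the parabolic visual boundary. Your reduction of cocompactness on distinct pairs to coboundedness of the quasi-action (via the coalescence vertex, whose height is essentially $\log_m$ of the distance between the two boundary points, so that a compact set of pairs maps to a bounded set of vertices) is correct and is the right way to transfer the hypothesis. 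The caveat, which you state yourself, is that your third step is the entire technical content of \cite{MSW1}: as written, your proposal defers to that theorem rather than proving it, which is precisely what the paper does as well. One detail to adjust if you were to flesh this out: the tree $T'$ produced by the rigidity theorem need not be (and cannot in general be made) regular; what one actually extracts is that a bounded-valence bushy tree carrying a cocompact isometric action fixing an end has parabolic visual boundary bilipschitz to some $\Q_p$ with the induced boundary action by similarities, so the claimed ``promotion to a regular $(p+1)$-valent tree'' should be replaced by this boundary statement.
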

\begin{proof}(Of Theorem \ref{anothertukia})   
By Lemma \ref{decompLEM} we have a natural projection $$\pi: Bilip_{{M}}(\R^n \times \Q_m) \to Bilip(\Q_m).$$
A uniform subgroup $U\subseteq Bilip_{{M}}(\R^n \times \Q_m ) $  projects to a uniform subgroup $\pi(U)$ of $Bilip(\Q_m)$. Since $U$ was assumed to act cocompactly on pairs of points in $\R^n \times \Q_m$ we have that $\pi(U)$ acts cocompactly on pairs of $\Q_m$. 
By Theorem 7 in \cite{MSW1} (Theorem \ref{MSWconj} above) we can conjugate $\pi(U)$ via  a bilipschitz map into $Sim(\Q_p)$ for some $p$ possibly different from $m$. 

After this conjugation we can assume that $U$ is a uniform subgroup of $Bilip_{{M}}(\R^m \times \Q_p)$ where $f \in U$ 
acts by 
$$f(x,y)=(f_1(x_1, \ldots, x_r,y), f_2(x_2,\ldots,x_r,y), \ldots,f_r(x_r,y), \sigma_f(y) )$$ 
and $\sigma_f$ is a similarity of $\Q_p$ and for each fixed $y$ the map 
$$f_y(x)=(f_1(x_1, \ldots, x_r,y), f_2(x_2,\ldots,x_r,y), \ldots,f_r(x_r,y))$$
is bilipschitz with respect to $D_M$.
We have further projections 
$$\pi_i: Bilip_M(\R^n \times \Q_p) \to Bilip_{M_i}(\R^{n_i } \times \Q_p)$$
given by $\pi_i(f)=(f_i, \ldots, f_r, \sigma_f)$. For each fixed $y\in \Q_p$ we have
$\pi_i(f)_y=(f_i, \ldots, f_r)$ is a bilipschitz map with respect to $D_{M_{i}}$ where $M_{i}$ is the $n_i \times n_i$ submatrix of $M$ that contains all eigenvalues greater than or equal to $\alpha_i$.

Now we can follow the conjugation proof found in Section 3 of \cite{D1}. The proof is easily adapted to our situation. The key change is that whenever a measure is called for we use  the product $\mu$ of Lebesgue measure $\lambda$ on $\R^n$ and Hausdorff measure $\nu$ on $\Q_p$. Following this proof, we will reprove any Lemmas that rely on measure theoretic arguments. 

The conjugation is done inductively on $i$ starting at $i=r$ and working down to $i=1$.
So for induction we assume that we have a uniform subgroup $U\subseteq Bilip_{M_i}(\R^{n_i} \times \Q_p)$ where each $f\in U$ has the form 
$$f(x_i, \ldots,x_r,y)=(f_i(x_i,\ldots,x_r,y), F({x}_{i+1},\ldots x_r,y), \sigma_f(y))$$
where $(F,\sigma_f) \in ASim_{M_{i+1}}(\R^{n_{i+1}}\times \Q_p)$. 
If $x_i$ represents a one dimensional subspace then Section 3.3 in \cite{D1} goes through unchanged.
If $x_i$ represents a higher dimensional subspace then we follow Section 3.4. 
First we find a $U$ invariant \emph{foliated conformal structure} i.e. a measurable assignment
$$\eta: \R^{n_i} \times \Q_p \to SL(n_i-n_{i+1},\R)/ SO(n_i-n_{i+1},\R)$$
such that $$\eta(x_i, \ldots, x_r, y)=f'_i(x_i,\ldots,x_r,y) [\eta( f(x_i,\ldots,x_r,y))]$$
where the derivative of $f_i$ is taken in the $x_i$ coordinate and the action is a standard matrix action on $SL((n_i-n_{i+1},\R)/ SO((n_i-n_{i+1},\R)$ (see Section 3.4 in \cite{D1} for more details).
Again this only uses differentiability in the $x_i$ direction and the group structure so no changes are needed in the proof. 

Next we follow Theorem 13 in \cite{D1} and prove that if $\eta$ is approximately continuous at some radial point then we can find a conjugating map that conjugates $f_i$ to be a similarity in the $x_i$ coordinate while preserving the fact that $F \in ASim_{M_{i+1}}(\R^{n_i} \times \Q_p)$.
Note that by Theorem 2.9.13 in \cite{Fed} we have that 
any measurable map from $\R^n \times \Q_p$ to a separable metric space is approximately continuous almost everywhere. In particular $\eta$ is approximately continuous almost everywhere. Furthermore since $U$ acts cocompactly on pairs of points of $\R^n\times \Q_p$ every point of $\R^n\times \Q_p$ is a radial point. 

The conjugating map is constructed from a sequence of $g_i \in U$ (the same sequence used in defining the radial point) rescaled by a family of similarities in $Sim_{M_i}(\R^{n_i} \times \Q_p)$ following \cite{D1}. The proof that the conjugation does what we claim it does (i.e. that $f_i$ is a similarity in $x_i$) requires a couple of lemmas on quasi-conformal maps. 
These are found in Section 3.4.2 in \cite{D1}. We restate and comment on one of these following this proof (Lemma \ref{qcLEM} below).

After the conjugation, each $f \in U$ has the form
$$f(x_i, \ldots,x_r,y)=( c_{x_{i+1},\ldots,x_r,y}A_{x_{i+1},\ldots,x_r,y}(x_i + B(x_{i+1},\ldots,x_r,y)), \quad \quad\quad \quad\quad \quad$$
$$\quad \quad\quad \quad\quad \quad\quad \quad\quad \quad\quad \quad\quad \quad\quad \quad\quad \quad\quad \quad F({x}_{i+1},\ldots x_r,y), \sigma_f(y))$$
and we must show that $c \in \R$ and $A \in O(n_i-n_{i+1},\R)$ do not depend on $x_{i+1},\ldots,x_r,y$. This follows sections 3.5 and 3.6 in \cite{D1}.
\end{proof}

\begin{lemma}[Lemma 10 in \cite{D1}]\label{qcLEM} Let $\mathcal{F}_{K,N}$ be a family of ($N,K$)-bilipschitz (quasi-similarity) maps in $Bilip_{M_i}(\R^{n_i} \times \Q_p)$  such that 
$$f(x_i,\cdots,x_r,y)=(f_i(x_i,\ldots, x_r,y), F(x_{i+1}, \ldots,y), \sigma_f(y))$$
where $f_i$ is bilipschitz in $x_i$ and $(F,\sigma_f)$ is in  $ASim_{M_{i+1}}(\R^{n_{i+1}}\times \Q_p)$. 
Then there exist $b,b'$ such that for any measurable set $E \subseteq \R^{n_i} \times \Q_p$ and all $f \in \mathcal{F}_K$
$$ b\mu(E)   \leq \mu(f(E)) \leq b'\mu(E).$$
\end{lemma}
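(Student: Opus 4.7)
The plan is to reduce the claim to measure-distortion bounds already known on each factor separately ($\R^{n_i}$ and $\Q_p$) and then recombine via Fubini with respect to the product measure $\mu=\lambda\times\nu$.  The key structural input is that the ASim form of $(F,\sigma_f)$ together with the product nature of the metric $D_{M_i,p}$ makes $f$ behave as a fibered map over $\sigma_f$, and the uniform $(N,K)$-quasi-similarity condition descends cleanly to each factor.

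First, I would observe that for each $f\in\mathcal{F}_{K,N}$ and each fixed $y_0\in\Q_p$, the slice map
\[
f_{y_0}:\R^{n_i}\to\R^{n_i},\qquad f_{y_0}(x_i,\ldots,x_r)=\bigl(f_i(x_i,\ldots,x_r,y_0),\,F(x_{i+1},\ldots,x_r,y_0)\bigr),
\]
is a $(N,K)$-quasi-similarity of $(\R^{n_i},D_{M_i})$, since evaluating the quasi-similarity inequality for $f$ on pairs $((x,y_0),(x',y_0))$ causes the max defining $D_{M_i,p}$ to collapse to $D_{M_i}$.  Dually, evaluating on pairs $((x,y),(x,y'))$ shows that $\sigma_f:\Q_p\to\Q_p$ is $(N,K)$-bilipschitz, and since $(F,\sigma_f)\in ASim_{M_{i+1}}(\R^{n_{i+1}}\times\Q_p)$ the map $\sigma_f$ is in fact a similarity with scale factor $s_f\in[N/K,NK]$.

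Next, I would apply the $\R^{n_i}$ case of the lemma (the original Lemma~10 in \cite{D1}) to each slice $f_{y_0}$ to obtain constants $c,c'>0$ depending only on $K$, $N$, and $M_i$ such that
\[
c\,\lambda(E')\ \leq\ \lambda\bigl(f_{y_0}(E')\bigr)\ \leq\ c'\,\lambda(E')
\]
for every measurable $E'\subseteq\R^{n_i}$ and every $y_0\in\Q_p$.  On the $\Q_p$ side, $\nu$ is the natural Hausdorff measure of some dimension $Q_p$, so a similarity of scale $s_f$ satisfies $\nu(\sigma_f(A))=s_f^{Q_p}\nu(A)$.

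To combine, for measurable $E\subseteq\R^{n_i}\times\Q_p$ with slices $E_{y'}=\{x:(x,y')\in E\}$, I note that $(f(E))_y=f_{\sigma_f^{-1}(y)}(E_{\sigma_f^{-1}(y)})$, so Fubini gives
\[
\mu(f(E))=\int_{\Q_p}\lambda\bigl(f_{\sigma_f^{-1}(y)}(E_{\sigma_f^{-1}(y)})\bigr)\,d\nu(y).
\]
Applying the uniform slice bound and then changing variables $y'=\sigma_f^{-1}(y)$ (which introduces the factor $s_f^{Q_p}\in[(N/K)^{Q_p},(NK)^{Q_p}]$) yields
\[
c\,(N/K)^{Q_p}\,\mu(E)\ \leq\ \mu(f(E))\ \leq\ c'\,(NK)^{Q_p}\,\mu(E),
\]
and the required constants $b$, $b'$ follow.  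The only real obstacle is verifying that the slice-wise distortion constants are uniform in both $f\in\mathcal{F}_{K,N}$ and $y_0\in\Q_p$, which is automatic from the uniformity of the $(N,K)$ data after restriction; the genuinely hard content is the $\R^{n_i}$ case, already proved as Lemma~10 in \cite{D1} and invoked here as a black box.
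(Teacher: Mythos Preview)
Your proposal is correct and follows essentially the same route as the paper: the paper's proof simply notes that the similarity $\sigma_f$ distorts the measure $\nu$ on $\Q_p$ by a controlled factor and then invokes Fubini together with the original Lemma~10 of \cite{D1} on the $\R^{n_i}$ slices. You have spelled out precisely these two ingredients (uniform slice bounds from \cite{D1} and the change of variables under $\sigma_f$) and combined them via Fubini, so your argument matches the paper's, only with more detail filled in.
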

\begin{proof}  Note that if $\sigma_f$ is an $N$-similarity then $\sigma_f$ distorts measure by a factor of $N$.
Otherwise this lemma is proved using Fubini's Theorem in the same way as the corresponding Lemma 10 in \cite{D1}.
\end{proof}

\bibliographystyle{amsalpha}


\def\cprime{$'$} \def\cprime{$'$}
\providecommand{\bysame}{\leavevmode\hbox to3em{\hrulefill}\thinspace}
\providecommand{\MR}{\relax\ifhmode\unskip\space\fi MR }
\providecommand{\MRhref}[2]{%
  \href{http://www.ams.org/mathscinet-getitem?mr=#1}{#2}
}
\providecommand{\href}[2]{#2}

\end{document}